\definecolor{darkgreen}{rgb}{0,0.45,0}
\DeclareMathAlphabet{\mathbf}{OT1}{cmr}{b}{n}
\mathchardef\mhyphen="2D
\def\matrixobject@{%
 \edef \next@{={\DirectionfromtheDirection@ }}%
 \expandafter \toks@ \next@ \plainxy@
 \let\xy@@ix@=\xyq@@toksix@
 \xyFN@ \OBJECT@}
\let\xy@entry@@norm=\entry@@norm
\def\entry@@norm@patched{%
 \let\object@=\matrixobject@
 \xy@entry@@norm }
\newcommand{\twocong}[2][0.5]{\ar@{}[#2] \save ?(#1)*{\cong}\restore}
\newcommand{\twoeq}[2][0.5]{\ar@{}[#2] \save ?(#1)*{=}\restore}
\newcommand{\rtwocell}[3][0.5]{\ar@{}[#2] \ar@{=>}?(#1)+/l 0.2cm/;?(#1)+/r 0.2cm/^{#3}}
\newcommand{\urtwocell}[3][0.5]{\ar@{}[#2] \ar@{=>}?(#1)+/dl 0.2cm/;?(#1)+/ur 0.2cm/^{#3}}
\newcommand{\rtwocello}[3][0.5]{\ar@{}[#2] \ar@{=>}?(#1)+/l 0.2cm/;?(#1)+/r 0.2cm/_{#3}}
\newcommand{\ltwocell}[3][0.5]{\ar@{}[#2] \ar@{=>}?(#1)+/r 0.2cm/;?(#1)+/l 0.2cm/^{#3}}
\newcommand{\ltwocello}[3][0.5]{\ar@{}[#2] \ar@{=>}?(#1)+/r 0.2cm/;?(#1)+/l 0.2cm/_{#3}}
\newcommand{\dtwocell}[3][0.5]{\ar@{}[#2] \ar@{=>}?(#1)+/u 0.2cm/;?(#1)+/d 0.2cm/^{#3}}
\newcommand{\dltwocell}[3][0.5]{\ar@{}[#2] \ar@{=>}?(#1)+/ur 0.2cm/;?(#1)+/dl 0.2cm/^{#3}}
\newcommand{\drtwocell}[3][0.5]{\ar@{}[#2] \ar@{=>}?(#1)+/ul 0.2cm/;?(#1)+/dr 0.2cm/^{#3}}
\newcommand{\dthreecell}[3][0.5]{\ar@{}[#2] \ar@3{->}?(#1)+/u 0.2cm/;?(#1)+/d 0.2cm/^{#3}}
\newcommand{\utwocell}[3][0.5]{\ar@{}[#2] \ar@{=>}?(#1)+/d 0.2cm/;?(#1)+/u 0.2cm/_{#3}}
\newcommand{\dtwocelltarg}[3][0.5]{\ar@{}#2 \ar@{=>}?(#1)+/u 0.2cm/;?(#1)+/d 0.2cm/^{#3}}
\newcommand{\utwocelltarg}[3][0.5]{\ar@{}#2 \ar@{=>}?(#1)+/d 0.2cm/;?(#1)+/u 0.2cm/_{#3}}
\newcommand{\MM}{\raisebox{0.04em}{\scalebox{0.76}{$\ominus$}}}
\newcommand{\PP}{\raisebox{0.04em}{\scalebox{0.76}{$\oplus$}}}
\newcommand{\DD}{\raisebox{0.04em}{\scalebox{0.76}{$\odot$}}}
\DeclareMathOperator{\colim}{colim}
\DeclareMathOperator{\im}{im}
\newcommand{\cat}[1]{\mathbf{#1}}
\newcommand{\op}{\mathrm{op}}
\newcommand{\thg}{{\mathord{\text{--}}}}
\newcommand{\dbr}[1]{\left\llbracket{#1}\right\rrbracket}
\newcommand{\defeq}{\mathrel{\mathop:}=}
\newcommand{\cd}[2][]{\vcenter{\hbox{\xymatrix#1{#2}}}}
\renewcommand{\phi}{\varphi}
\renewcommand{\O}{{\mathcal O}}
\renewcommand{\P}{{\mathcal P}}
\newcommand{\Q}{{\mathcal Q}}
\newcommand{\xtor}[1]{\cdl[@1]{{} \ar[r]|-{\object@{|}}^{#1} & {}}}
\def\hookleftarrowfill@{\arrowfill@\leftarrow\relbar{\relbar\joinrel\rhook}}
\def\twoheadleftarrowfill@{\arrowfill@\twoheadleftarrow\relbar\relbar}
\def\leftbararrowfill@{\arrowdoublefill@{\leftarrow\mkern-5mu}\relbar\mapstochar\relbar\relbar}
\def\Leftbararrowfill@{\arrowdoublefill@{\Leftarrow\mkern-2mu}\Relbar\Mapstochar\Relbar\Relbar}
\def\leftringarrowfill@{\arrowdoublefill@{\leftarrow\mkern-3mu}\relbar{\mkern-3mu\circ\mkern-2mu}\relbar\relbar}
\def\lefttriarrowfill@{\arrowfill@{\mathrel\triangleleft\mkern0.5mu\joinrel\relbar}\relbar\relbar}
\def\Lefttriarrowfill@{\arrowfill@{\mathrel\triangleleft\mkern1mu\joinrel\Relbar}\Relbar\Relbar}
\def\hookrightarrowfill@{\arrowfill@{\lhook\joinrel\relbar}\relbar\rightarrow}
\def\twoheadrightarrowfill@{\arrowfill@\relbar\relbar\twoheadrightarrow}
\def\rightbararrowfill@{\arrowdoublefill@{\relbar\mkern-0.5mu}\relbar\mapstochar\relbar\rightarrow}
\def\Rightbararrowfill@{\arrowdoublefill@{\Relbar\mkern-2mu}\Relbar\Mapstochar\Relbar\Rightarrow}
\def\rightringarrowfill@{\arrowdoublefill@\relbar\relbar{\mkern-2mu\circ\mkern-3mu}\relbar{\mkern-3mu\rightarrow}}
\def\righttriarrowfill@{\arrowfill@\relbar\relbar{\relbar\joinrel\mkern0.5mu\mathrel\triangleright}}
\def\Righttriarrowfill@{\arrowfill@\Relbar\Relbar{\Relbar\joinrel\mkern1mu\mathrel\triangleright}}
\def\leftrightarrowfill@{\arrowfill@\leftarrow\relbar\rightarrow}
\def\mapstofill@{\arrowfill@{\mapstochar\relbar}\relbar\rightarrow}
\renewcommand*\xleftarrow[2][]{\ext@arrow 20{20}0\leftarrowfill@{#1}{#2}}
\providecommand*\xLeftarrow[2][]{\ext@arrow 60{22}0{\Leftarrowfill@}{#1}{#2}}
\providecommand*\xhookleftarrow[2][]{\ext@arrow 10{20}0\hookleftarrowfill@{#1}{#2}}
\providecommand*\xtwoheadleftarrow[2][]{\ext@arrow 60{20}0\twoheadleftarrowfill@{#1}{#2}}
\providecommand*\xleftbararrow[2][]{\ext@arrow 10{22}0\leftbararrowfill@{#1}{#2}}
\providecommand*\xLeftbararrow[2][]{\ext@arrow 50{24}0\Leftbararrowfill@{#1}{#2}}
\providecommand*\xleftringarrow[2][]{\ext@arrow 10{26}0\leftringarrowfill@{#1}{#2}}
\providecommand*\xlefttriarrow[2][]{\ext@arrow 80{24}0\lefttriarrowfill@{#1}{#2}}
\providecommand*\xLefttriarrow[2][]{\ext@arrow 80{24}0\Lefttriarrowfill@{#1}{#2}}
\renewcommand*\xrightarrow[2][]{\ext@arrow 01{20}0\rightarrowfill@{#1}{#2}}
\providecommand*\xRightarrow[2][]{\ext@arrow 04{22}0{\Rightarrowfill@}{#1}{#2}}
\providecommand*\xhookrightarrow[2][]{\ext@arrow 00{20}0\hookrightarrowfill@{#1}{#2}}
\providecommand*\xtwoheadrightarrow[2][]{\ext@arrow 03{20}0\twoheadrightarrowfill@{#1}{#2}}
\providecommand*\xrightbararrow[2][]{\ext@arrow 01{22}0\rightbararrowfill@{#1}{#2}}
\providecommand*\xRightbararrow[2][]{\ext@arrow 04{24}0\Rightbararrowfill@{#1}{#2}}
\providecommand*\xrightringarrow[2][]{\ext@arrow 01{26}0\rightringarrowfill@{#1}{#2}}
\providecommand*\xrighttriarrow[2][]{\ext@arrow 07{24}0\righttriarrowfill@{#1}{#2}}
\providecommand*\xRighttriarrow[2][]{\ext@arrow 07{24}0\Righttriarrowfill@{#1}{#2}}
\providecommand*\xmapsto[2][]{\ext@arrow 01{20}0\mapstofill@{#1}{#2}}
\providecommand*\xleftrightarrow[2][]{\ext@arrow 10{22}0\leftrightarrowfill@{#1}{#2}}
\providecommand*\xLeftrightarrow[2][]{\ext@arrow 10{27}0{\Leftrightarrowfill@}{#1}{#2}}
\newcommand*{\Cdot}[1][0.7]{%
  \mathop{\mathpalette{\CdotAux{#1}}\bullet}%
}
\newdimen\CdotAxis
\newcommand*{\CdotAux}[3]{%
  {%
    \settoheight\CdotAxis{$#2\vcenter{}$}%
    \sbox0{%
      \raisebox\CdotAxis{%
        \scalebox{#1}{%
          \raisebox{-\CdotAxis}{%
            $\mathsurround=0pt #2#3$%
          }%
        }%
      }%
    }%
    % Remove depth that arises from scaling.
    \dp0=0pt %
    % Decrease scaled height.
    \sbox2{$#2\bullet$}%
    \ifdim\ht2<\ht0 %
      \ht0=\ht2 %
    \fi
    % Use the same width as the original \cdot.
    \sbox2{$\mathsurround=0pt #2#3$}%
    \hbox to \wd2{\hss\usebox{0}\hss}%
  }%
}
\numberwithin{equation}{section}
\theoremstyle{plain}
\newtheorem{Thm}{Theorem}
\newtheorem{Prop}[Thm]{Proposition}
\newtheorem{Lemma}[Thm]{Lemma}
\theoremstyle{definition}
\newtheorem{Defn}[Thm]{Definition}
\newtheorem{Rk}[Thm]{Remark}
\newcommand{\Cat}{\cat{Cat}}
\def\wCat{\omega\mhyphen\Cat}
\newcommand{\atwo}{{\mathbf 2}}
\newcommand{\mm}[1][n]{\mathbin{\scriptstyle\circ_{#1}}}
\newcommand{\cell}[1]{{\boldsymbol{#1}}}
\begin{document}
\leftmargini=2em 
\title{Orientals and cubes, inductively}
\author{Mitchell Buckley}
\address{Department of Computing, 
  Macquarie University, NSW 2109, Australia} 
\email{mitchell.buckley@mq.edu.au}
\author{Richard Garner} 
\address{Department of Mathematics, 
  Macquarie University, NSW 2109, Australia}
\email{richard.garner@mq.edu.au} 

\thanks{The first author gratefully acknowledges the support of
  Macquarie University Research Centre funding; the second author
  acknowledges, with equal gratitude, the support of Australian Research
  Council Discovery Projects DP110102360 and DP130101969.}

\subjclass[2010]{Primary: 18D05, 18G50}
\date{\today}
\begin{abstract}
  We provide direct inductive constructions of the orientals and the
  cubes, exhibiting them as the iterated cones, respectively, the
  iterated cylinders, of the terminal strict globular
  $\omega$-category.
\end{abstract}
\maketitle

\section{Introduction}
\label{sec:introduction}

A notorious aspect of the theory of weak higher dimensional categories
is the proliferation of models that have been proposed for the
notion~\cite{Leinster2002A-survey}; a major outstanding problem is
showing these different models to be suitably equivalent. Among the
technical challenges facing anyone looking to do so is one of
\emph{geometry}, since in every kind of model, one has a notion of
``$n$-cell'', but between models the shapes of these $n$-cells may
differ. There is a general agreement that ``$0$-cell'' and
``$1$-cell'' should mean ``point'' and ``arrow''; but beyond this, the
$n$-cells could be, among other things, simplicial, cubical or
globular in shape. In dimension two, for example, this means that
cells could take any of the following forms:
\begin{equation*}
  \cd{
    & {\bullet} \utwocell[0.6]{d}{} \ar[dr]^-{} \\ 
    {\bullet} \ar[ur]_-{} \ar[rr]_-{} & &
    {\bullet}\\
  } \qquad \text{or} \qquad 
  \cd{
    {\bullet} \ar[d]^-{} \ar[r]^-{} \rtwocell{dr}{} &
    {\bullet} \ar[d]_-{} \\
    {\bullet} \ar[r]^-{} &
    {\bullet}
  } \qquad \text{or} \qquad 
  \cd[@C-0.4em]{
    {\bullet} \ar@/^8pt/[rr]^-{} \ar@/_8pt/[rr]^-{} \ar@{}[rr] \ar@{=>}?+/u 0.16cm/;?+/d 0.16cm/ & &
    {\bullet}\rlap{ .}
  } 
\end{equation*}
In comparing two notions of model, then, a first step must always be
to describe a construction by which the basic cell-shapes of the one
kind of model may be built out of the cell-shapes of the other.

In the literature there are certain equivalences of models which have
been fully realised; one is the equivalence of strict globular
$\omega$-categories and strict cubical $\omega$-categories with
connections~\cite{Al-Agl2002Multiple}; another is the equivalence of
strict globular $\omega$-categories with \emph{complicial
  sets}~\cite{Verity2008Complicial}, whose geometry is simplicial in
nature. In particular, this means that the basic $n$-cell shapes of
these cubical and simplicial models can be realised as strict globular
$\omega$-categories, known respectively as the \emph{cubes} and the
\emph{orientals}. The orientals were constructed by Street
in~\cite{Street1987The-algebra}; his later \emph{parity
  complexes}~\cite{Street1991Parity} generalised the construction to
permit the realisation by strict globular $\omega$-categories of a
wide range of oriented polyhedra, including not only the orientals but
also the cubes.

Now, in undertaking the as-yet-unrealised task of relating simplicial,
cubical and globular models of \emph{weak} $\omega$-categories, it is
clear from the discussion above that a reasonable first step would be
the construction of suitably weakened analogues of the orientals or
cubes---that is, realisations of each $n$-simplex or $n$-cube as a
weak globular $\omega$-category. In this context, the theory of parity
complexes is of no use, since it makes free and implicit use of the
\emph{middle-four interchange} axioms present in a strict higher
category, but absent from a truly weak model; and so it is of interest
to find alternate constructions of the (strict) orientals and cubes
that may be more liable to adapt to the weak context.

In this paper, we describe one such alternate construction, which
builds the orientals and cubes inductively: the $(n+1)$st
oriental will be obtained as the \emph{cone} of the $n$th oriental,
and the $(n+1)$st cube as the \emph{cylinder} of the $n$th one. Here,
``cone'' and ``cylinder'' are certain operations on
$\omega$-categories to be introduced below; the nomenclature comes,
of course, from topology, where the \emph{cylinder} of a topological
space is its product with the interval, and the \emph{cone} the result
of collapsing one end of the cylinder to a point.

We have not yet attempted to adapt our inductive constructions from
strict to weak $\omega$-categories, but even without having done so,
we may still justify the worth of our inductive constructions from
another perspective: simplicity. The theory of parity complexes is
challenging, and the proof that any parity complex can be realised by
a strict $\omega$-category is both substantial and combinatorially
intricate. Our construction, by contrast, is relatively elementary,
and the proof of the equivalence with the original approach is
straightforward.

We have obtained further results concerning the $\omega$-categorical
cone and cylinder constructions; for reasons of space, the details of
these results are reserved for a future paper, but let us at least
outline them here. The first makes precise the analogy between our
cones and cylinders and the topological ones, by exhibiting the
cylinder of a strict globular $\omega$-category $X$ as its \emph{lax
  Gray tensor product}~\cite{Crans1995Pasting} $X \otimes \cat{2}$
with the arrow category, and exhibiting the cone of $X$ as the pushout
of the codomain inclusion $X \rightarrow X \otimes \cat{2}$ along the
unique map $X \rightarrow 1$. The second additional result has to do
with the \emph{freeness} of the orientals and cubes.
In~\cite{Street1991Parity}, a strict globular $\omega$-category is
called \emph{free} (also \emph{cofibrant}~\cite{Metayer2008Cofibrant})
when it admits a presentation by iteratively adjoining new $n$-cells
into existing $n$-cell boundaries. An important result
of~\cite{Street1991Parity} (the ``excision of extremals'' algorithm)
shows that the strict globular $\omega$-category on any parity
complex---so in particular, any oriental or cube---is free. Our second
additional result allows us to recover the freeness of the orientals
and cubes inductively, by showing that that both cone and cylinder
\emph{preserve freeness of $\omega$-categories}.

Beyond this introduction, this paper comprises the following parts.
Section~\ref{sec:background} describes some necessary background on
$\omega$-categories; Section~\ref{sec:cosl-cones-orient} introduces
our cone and cylinder constructions; Section~\ref{sec:orientals}
proves that the iterated cones of the terminal $\omega$-category are
the orientals, while Section~\ref{sec:cubes} proves that the iterated
cylinders of the terminal $\omega$-category are the cubes.
Appendix~\ref{sec:proofs-well-defin} gives proofs of well-definedness
deferred from Section~\ref{sec:cosl-cones-orient}.

\section{Background}
\label{sec:background}
In the rest of the paper, \emph{$\omega$-category} will mean
\emph{strict globular $\omega$-category}; in this section, we recall
those aspects of their theory necessary for our development. A
\emph{globular set} $X$ is a diagram of sets
\begin{equation*}
  \cd{
    \dots \ar@<3pt>[r]^-{s} \ar@<-3pt>[r]_-{t} &
    X_{n+1} \ar@<3pt>[r]^-{s} \ar@<-3pt>[r]_-{t} &
    X_n \ar@<3pt>[r]^-{s} \ar@<-3pt>[r]_-{t} & 
    \dots \ar@<3pt>[r]^-{s} \ar@<-3pt>[r]_-{t} & 
    X_0
  }
\end{equation*}
satisfying the globularity equations $ss = st$ and $ts = tt$. If $X$
is a globular set, then its \emph{$n$-cells} are the elements of
$X_n$; a pair of $n$-cells $x,y$ are \emph{parallel} if $n=0$ or if
$n>0$ and $(sx,tx) = (sy,ty)$. Given $0 \leqslant n < k$, we write $s_n, t_n$
for the maps $s^{k-n}, t^{k-n} \colon X_k \rightarrow X_n$, and for
$x \in X_k$ we call the parallel pair $(s_nx, t_nx)$ the
\emph{$n$-boundary} of $x$. We write $x \colon y \rightsquigarrow z$
to indicate that $(y,z)$ is the $n$-boundary of $x$; when $k = n+1$, we
may write $x \colon y \rightarrow z$ instead.

A \emph{small $\omega$-category} is a globular set $X$ with
\emph{identity} and \emph{composition} functions
\begin{equation*}
  i \colon X_n \rightarrow X_{n+1} 
  \qquad \text{and} \qquad
  \mm \colon X_k \,\mathop{{}_{t_n}\! \mathord \times_{s_n}} X_k \rightarrow X_k
\end{equation*}
for all $0 \leqslant n < k$, satisfying the following
three kinds of axioms. First, the \emph{source--target} axioms
that $s(ix) = t(ix) = x$ for all cells $x$ and that:
\begin{equation*} 
  s_k(x \mm y) =
  \begin{cases} s_k(x) \mm s_k(y) & \text{if $k > n$;}\\ s_k(y)
    & \text{if $k \leqslant n$,}
  \end{cases} \quad \ t_k(x \mm y) =
  \begin{cases} t_k(x) \mm t_k(y) & \text{if $k > n$;}\\ t_k(x)
    & \text{if $k \leqslant n$,}
  \end{cases}
\end{equation*} 
for all suitable cells $x$ and $y$. Second, the \emph{category} axioms
that $x \mm i(sx) = x = i(tx) \mm x$ and
$x \mm (y \mm z) = (x \mm y) \mm z$ for all suitable cells $x, y, z$.
Finally, the \emph{interchange} axiom that
$(x \mm y) \mm[k] (z \mm w) = (x \mm[k] z) \mm (y \mm[k] w)$ for all
$n < k$ and suitable cells $x,y,z,w$.

The \emph{dual} $X^\op$ of a globular set $X$ is the globular set
obtained by interchanging $s$ and $t$ at each stage; the \emph{dual}
$X^\op$ of a small $\omega$-category is given by the dual of the
underlying globular set of $X$ equipped with the same identities and
the reversed compositions at each dimension.

A map $f \colon X \rightarrow Y$ between globular sets comprises
functions $f_n \colon X_n \rightarrow Y_n$ satisfying
$sf_{n+1} = f_ns$ and $tf_{n+1} = f_nt$. An \emph{$\omega$-functor}
$f \colon X \rightarrow Y$ between $\omega$-categories is a map of
underlying globular sets which preserve composition and identities, in
the sense that $f(ix) = i(fx)$ and $f(x \mm y) = fx \mm fy$ for all suitable
cells $x$ and $y$. Of course, $\omega$-functors compose, and so we
have the category $\omega\text-\cat{Cat}$ of small $\omega$-categories
and $\omega$-functors. 

The category $\omega\text-\cat{Cat}$ has finite products, computed at
the level of underlying globular sets, and so we can consider the
category $(\omega\text-\cat{Cat})\text-\cat{Cat}$ of small
$\omega\text-\cat{Cat}$-\emph{enriched}~\cite{Kelly1982Basic}
categories; this is in fact equivalent to $\omega\text-\cat{Cat}$.
Indeed, given an $\omega$-category $X$, we obtain an
$\omega\text-\cat{Cat}$-category with object set $X_0$, with hom
$X(x,y)$ the $\omega$-category whose $n$-cells are the $(n+1)$-cells
$x \rightsquigarrow y$ in $X$, and with composition $\omega$-functors
$X(y,z) \times X(x,y) \rightarrow X(x,z)$ given by $\mm[0]$ in $X$.
Conversely, if $X$ is an $\omega\text-\cat{Cat}$-category, then there
is an $\omega$-category whose $0$-cells are the objects of $X$ and
whose $(n+1)$-cells are the disjoint union of the $n$-cells of each
$X(x,y)$, with composition $\mm[0]$ given by the composition maps of
$X$, and composition $\mm[n+1]$ given by $\mm$ in the appropriate
hom-$\omega$-category.

Using this identification, we obtain the standard enriched-categorical
notion of module (= profunctor) for $\omega$-categories. A \emph{right
  module} over an $\omega$-category $X$ comprises $\omega$-categories
$M(x)$ for each $x \in X_0$ together with $\omega$-functors
$m \colon M(y) \times X(x,y) \rightarrow M(x)$ for each $x,y \in X_0$
making each diagram:
\begin{equation*}
  \cd[@C-0.6em]{
    {M(z) \times X(y,z) \times X(x,y)} \ar[r]^-{m \times 1} \ar[d]_{1 \times m} &
    {M(y) \times X(x,y)} \ar[d]^{m} \\
    {M(z) \times X(x,z)} \ar[r]_-{m} &
    {M(x)}
  } \!\!\! 
  \cd[@!C@C-5em]{
    & {M(x) \times 1} \ar[dl]_-{1 \times i} \ar[dr]^-{\cong} \\
    {M(x) \times X(x,x)} \ar[rr]_-{m} & &
    {M(x)}
  }
\end{equation*}
commute in $\omega\text-\cat{Cat}$. A \emph{left module} over $X$ is
defined dually, while if $X$ and $Y$ are $\omega$-categories, then a
\emph{$Y$-$X$-bimodule} comprises $\omega$-categories $M(x,y)$ for
$x,y \in X_0 \times Y_0$ such that each $M(x,\thg)$ is a left
$Y$-module, each $M(\thg,y)$ is a right $X$-module, and each diagram
of the following form commutes:
\begin{equation*}
    \cd[@C-0.6em]{
    {Y(y,y') \times M(x,y) \times X(x',x)} \ar[r]^-{m \times 1} \ar[d]_{1 \times m} &
    {M(x,y') \times X(x',x)} \ar[d]^{m} \\
    {Y(y,y') \times M(x',y)} \ar[r]_-{m} &
    {M(x',y')}\rlap{ .}
  }
\end{equation*}

General enriched-categorical principles allow us to assign to any right
$X$-module $M$ a new $\omega$-category $\mathrm{coll}(M)$, called the
\emph{collage}~\cite{Street2004Cauchy} of $M$. As an
$\omega\text-\cat{Cat}$-category, $\mathrm{coll}(M)$ has object-set
$X_0 + \{\star\}$ and hom-$\omega$-categories:
\begin{equation*}
 \mathrm{coll}(M)(x,y) = \begin{cases} 
 X(x,y) & \text{if $x,y \in X_0$;} \\
 M(x) &\text{if $x \in X_0$ and $y = \star$;} \\
 \emptyset & \text{if $y \in X_0$ and $x = \star$;} \\
 1 &\text{if $x=y=\star$.}
\end{cases}
\end{equation*}
The non-trivial compositions in $\mathrm{coll}(M)$ are obtained from
composition in $X$ augmented by the action morphisms
$M(y) \times X(x,y) \rightarrow M(x)$. Dually, each right $X$-module
also has a collage, while if $M$ is a $Y$-$X$-bimodule, then its
collage $\mathrm{coll}(M)$ has object set $X_0 + Y_0$, hom-categories
\begin{equation*}
\mathrm{coll}(M)(u,v) = \begin{cases} 
 X(u,v) &\text{if $x,y \in X_0$;}\\
 \emptyset & \text{if $u \in Y_0$ and $v \in X_0$;}\\
 M(u,v) &\text{if $u \in X_0$ and $v \in Y_0$;}\\
 Y(u,v) & \text{if $x,y \in Y_0$,}
\end{cases}
\end{equation*}
and non-trivial compositions given by the composition morphisms 
of $X$ together with the left and right $M$-action morphisms
$M(x,y) \times X(x',x) \rightarrow M(x',y)$ and $Y(y,y') \times M(x,y)
\rightarrow M(x,y')$.

\section{Cones and cylinders}
\label{sec:cosl-cones-orient}
We now introduce the lax \emph{coslices} and \emph{slices} of an
$\omega$-category, and use them to define the basic \emph{cone} and
\emph{cylinder} constructions, whose iterated application will yield
the orientals and cubes. To simplify notation, it will be convenient
henceforth to adopt the following conventions. First, we assume that
$\mm$ binds more tightly than $\mm[k]$ whenever $n < k$. In other
words, we take it that:
\begin{equation*}
  x \mm y \mm[k] z \defeq (x \mm y) \mm[k] z \qquad \text{and} \qquad 
  x \mm[k] y \mm z \defeq x \mm[k] (y \mm z)\rlap{ ,}
\end{equation*}
and similarly for longer unbracketed composites. Second, we implicitly
identify any $k$-cell with the identity $(k+\ell)$-cell thereon where
necessary to make binary composition type-check. In other words, we
take it that
\begin{equation*}
  x \mm y \defeq x \mm i^\ell(y) \qquad \text{and} \qquad w \mm z \defeq i^\ell(w) \mm z
\end{equation*}
for all suitable $x \in X_{k+\ell}$ and $y \in X_k$ or
$w \in X_{k+\ell}$ and $z \in X_k$. We refer to the resultant
composite as the \emph{whiskering} of the $(k+\ell)$-cell by the
$k$-cell.

\begin{Defn}
  \label{def:2}
  If $X$ is an $\omega$-category and $a \in X_0$, then the \emph{lax
    coslice} $\omega$-category $a / X$ is defined as follows.
  \begin{itemize}
  \item $0$-cells $\cell x = (x, {\bar x})$ are pairs $x \in X_0$ and
    ${\bar x} \colon a \to x$.
  \vskip0.25\baselineskip
  \item $(n+1)$-cells $\cell x = (x, {\bar x})$ with $i$-boundary $(\cell
    m_i, \cell p_i)$ for $i \leqslant n$
    are given by pairs of the following form when $n$ is even:
  \begin{equation*}
    (x \colon m_n \to p_n, \ 
    {\bar x} \colon \bar p_{n-1}
    \mm[n-1] \cdots \bar p_3 \mm[3] \bar p_1 \mm[1] x \mm[0] \bar m_0 \mm[2] \bar m_2 \cdots \mm \bar m_n \to \bar p_n)\rlap{ ,}
  \end{equation*}
  and by pairs of the following form when $n$ is odd:
  \begin{equation*}
    (x \colon m_n \to p_n, \ 
    {\bar x} \colon \bar m_n \to \bar p_n \mm \cdots \bar p_3 \mm[3] \bar p_1 \mm[1] x \mm[0] \bar m_0 \mm[2] \bar m_2 \cdots \mm[n-1] \bar m_{n-1})\rlap{ .}
  \end{equation*}
\item If $\cell x$ and $\cell y$ satisfy $t_n(\cell x) = s_n(\cell y)$,
  with common $i$-boundary $(\cell m_i, \cell p_i)$ for each $i < n$,
  then $\cell y \mm \cell x$ is given by the following pair when $n$ is even:
  \begin{equation*}
    (y \mm x,\, \bar y \mm[n+1] \bar p_{n-1} \mm[n-1] \cdots \bar p_3 \mm[3] \bar p_1 \mm[1] s_{n+1}y \mm[0] \bar m_0 \mm[2] \bar m_2 \cdots \mm[n-2] \bar m_{n-2} \mm {\bar x})
  \end{equation*}
  and by the following pair when $n$ is odd:
  \begin{equation*}
    (y \mm x,\, \bar y \mm \bar p_{n-2} \mm[n-2] \cdots \bar p_3 \mm[3] \bar p_1 \mm[1] t_{n+1}x \mm[0] \bar m_0 \mm[2] \bar m_2 \cdots \mm[n-1] \bar m_{n-1} \mm[n+1] {\bar x})\rlap{ .}
  \end{equation*}
\item The identity $(n+1)$-cell on an $n$-cell $(x,{\bar x})$ is $(ix,
  i{\bar x})$.
\end{itemize}
We write $\pi \colon a/X \rightarrow X$ for the $\omega$-functor
defined by $\pi(x, {\bar x}) = x$. 

Dually, for any $b \in X_0$, we define the \emph{lax slice}
$\omega$-category $X/b$ to be $(b/X^\op)^\op$; explicitly, this means
that:
  \begin{itemize}
  \item $0$-cells $\cell x = (x, \hat x)$ are pairs $x \in X_0$ and
    $\hat x \colon x \to b$.
  \vskip0.25\baselineskip
  \item $(n+1)$-cells $\cell x = (x, \hat x)$ with $i$-boundary $(\cell
    m_i, \cell p_i)$ for $i \leqslant n$
    are given by pairs of the following form when $n$ is even:
  \begin{equation*}
    (x \colon m_n \to p_n, \ 
    \hat x \colon \hat m_n \rightarrow \hat p_n
    \mm[n] \cdots \hat p_2 \mm[2] \hat p_0 \mm[0] x \mm[1] \hat m_1 \mm[3] \hat m_3 \cdots \mm[n-1] \hat m_{n-1})\rlap{ ,}
  \end{equation*}
  and by pairs of the following form when $n$ is odd:
  \begin{equation*}
    (x \colon m_n \to p_n, \ 
    \hat x \colon \hat p_{n-1} \mm[n-1] \cdots \hat p_2 \mm[2] \hat p_0 \mm[0] x \mm[1] \hat m_1 \mm[3] \hat m_3 \cdots \mm \hat m_n \to \hat p_n)\rlap{ ,}
  \end{equation*}
% \item If $\cell x$ and $\cell y$ satisfy $t_n(\cell x) = s_n(\cell y)$,
%   with common $i$-boundary $(\cell m_i, \cell p_i)$ for each $i < n$,
%   then $\cell y \mm \cell x$ is given by the following pair when $n$ is even:
%   \begin{equation*}
%     (y \mm x,\, \hat y \mm \hat p_{n-2} \mm[n-2] \cdots \hat p_2 \mm[2] \hat p_0 \mm[0] t_{n+1}x \mm[1] \hat m_1 \mm[3] \hat m_3 \cdots \mm[n-1] \hat m_{n-1} \mm[n+1] \hat x)
%   \end{equation*}
%   and by the following pair when $n$ is odd:
%   \begin{equation*}
%     (y \mm x,\, \hat y \mm[n+1] \hat p_{n-1} \mm[n-1] \cdots \hat p_2 \mm[2] \hat p_0 \mm[0] s_{n+1}y \mm[1] \hat m_1 \mm[3] \hat m_3 \cdots \mm[n-2] \hat m_{n-2} \mm \hat x)\rlap{ .}
%   \end{equation*}
% \item The identity $(n+1)$-cell on an $n$-cell $(x,\hat x)$ is $(ix,
%   i\hat x)$.
\end{itemize}
with composition being given dually to that in $a / X$. Combining the
preceding two constructions, if $X$ is an $\omega$-category and
$a, b \in X_0$, then the \emph{lax bislice} $a/X/b$ is the pullback of
$\omega$-categories $a/X \times_X X/b$. Explicitly, this means that
objects in $a / X / b$ are triples $(x, \bar x, \hat x)$ where
$x \in X_0$ and $\bar x \colon a \rightarrow x$ and
$\hat x \colon x \rightarrow b$, and similarly for higher cells.
\end{Defn}
In pasting notation, a $1$-cell
$(f, \bar f) \colon (x, \bar x) \rightarrow (y, \bar y)$ in $a/X$ is
given by a lax-commutative triangle as on the left of the following
diagram; the entirety of this same diagram depicts a $2$-cell
$(\alpha, \bar \alpha) \colon (f, \bar f) \rightarrow (g, \bar g)$ in
$a/X$.
\begin{equation*}
  \cd[@-0.2em@R+1em]{
    & {a} \ar[dl]_-{\bar x} \ar[dr]^-{\bar y} \rtwocell[0.8]{d}{\bar f} \\
    {x} \ar@/_1em/[rr]_-{f} & &
    {y}
  }\qquad \quad \cd{{} \ar@3[r]^{\bar \alpha} & {}} \qquad \quad 
  \cd[@-0.2em@R+1em]{
    & {a} \ar[dl]_-{\bar x} \ar[dr]^-{\bar y} \rtwocell[0.55]{d}{\bar g} \urtwocell[1.075]{d}{\alpha} \\
    {x} \ar@/^0.5em/[rr]^(0.57){g} \ar@/_1em/[rr]_(0.43){f} & &
    {y}
  }
\end{equation*}
Correspondingly, a $1$-cell in the lax slice $X/b$ is as on the right
in:
\begin{equation*}
  \cd[@-0.2em@R+1em]{
    {x} \ar[dr]_-{\hat x} \ar@/^1em/[rr]^(0.57){g} \ar@/_0.5em/[rr]_(0.43){f} & &
    {y} \ar[dl]^-{\hat y} \\ &
    {b} \rtwocell[0.33]{u}{\hat f} \urtwocell[1.05]{u}{\alpha}
  }
  \qquad \quad \cd{{} \ar@3[r]^{\hat \alpha} & {}} \qquad \quad 
  \cd[@-0.2em@R+1em]{
    {x} \ar@/^1em/[rr]^-{g} \ar[dr]_-{\hat x} & \rtwocell[0.35]{d}{\hat g} &
    {y\rlap{ ,}} \ar[dl]^-{\hat y} \\ &
    {b}
  }
\end{equation*}
while the whole diagram depicts a typical $2$-cell.

It is not immediate that the lax coslice (and hence slice)
$\omega$-categories are well-defined. One way to show this is to view
their construction as a particular case of the \emph{Grothendieck
  construction} for $\omega$-categories
of~\cite[Section~4.1]{Warren2008Homotopy}. This construction assigns
to each left module $M$ over an $\omega$-category $X$, an
$\omega$-functor (an ``opfibration'') $\int\! M \rightarrow X$;
applying it to the representable left module $X(a, \thg)$ yields
$\pi \colon a/X \rightarrow X$, so that the well-definedness of $a/X$
is a consequence of~\cite[Proposition~4.6]{Warren2008Homotopy}.
However, one may also prove well-definedness directly; so as to have a
self-contained presentation, we give this proof as
Proposition~\ref{prop:1} below.\looseness=-1

\begin{Defn}
  \label{def:5}
  Let $X$ be an $\omega$-category and $a,b \in X_0$. We define the
  $\omega$-functor $\Cdot \colon a/X \times X(b,a) \rightarrow b/X$ to
  have action on cells given inductively as follows:
\begin{itemize}
\item On $0$-cells, $\cell x \Cdot h = (x, {\bar x} \mm[0] h)$;
\item On $(n+1)$-cells, $\cell x \Cdot h = (x, {\bar x} \mm[0] h) \colon
  s\cell x \Cdot sh \rightarrow t\cell x \Cdot th$.
\end{itemize}
\end{Defn}
Once again, it is not immediate that this gives a well-defined
$\omega$-functor; we verify this in Proposition~\ref{prop:5} below.
Thereafter, it is immediate that these actions satisfy the necessary
associativity and unit axioms to give a right $X$-module $(\thg)/X$.
By duality, we obtain from the lax slice categories $X / b$ a left
module $X / (\thg)$, and from the lax bislice categories an
$X$-$X$-bimodule $(\thg) / X / (\thg)$ whose value at $a,b$ is $a/X/b$
with right $X$-action inherited from $a/X$ and left $X$-action
inherited from $X/b$. Using these modules, we may now give:

\begin{Defn}
  \label{def:4}
  If $X$ is an $\omega$-category, then $s(X)$, the \emph{cone under
    $X$} is the collage of the right $X$-module $(\thg)/X$. The
  \emph{cone over $X$}, $\bar s(X)$, is the collage of the left
  $X$-module $(\thg)/X$, while the \emph{cylinder on $X$}, $c(X)$, is
  the collage of the $X$-$X$-bimodule $(\thg) / X / (\thg)$.
\end{Defn}

\section{Orientals} \label{sec:orientals} In this section, we prove
our first main result, identifying the iterated cones of the terminal
$\omega$-category with the \emph{orientals}
of~\cite{Street1987The-algebra}. We begin by recalling the definition
of the orientals, following the presentation
of~\cite{Street1991Parity}. For any natural numbers $n$ and $j$, we
write $[n]$ for the set $\{0, \dots, n\}$, and write $[n]_{j}$ for
the set of order-preserving injections $[j] \rightarrowtail [n]$. If
$j > 0$ and $a \in [n]_{j}$, then we define the sets of \emph{even}
and \emph{odd} faces $a^+, a^- \subset [n]_{j-1}$ by
\[a^+ = \{a\delta_{2i} : 0 \leqslant 2i \leqslant j\}
\quad \text{and} \quad a^- = \{a\delta_{2i+1} : 0 \leqslant 2i+1
\leqslant j\}\rlap{ ,}
\]
where here $\delta_{k} \colon [j-1] \rightarrowtail [j]$ is the unique
order-preserving injection for which $k \notin \im \delta_k$. If
$\xi \subset [n]_{j}$, then we write
$\xi^- = \bigcup_{a \in \xi} a^-$ and
$\xi^+ = \bigcup_{a \in \xi} a^+$. We may write elements
$a \in [n]_{j}$ as increasing lists $(a_0 \cdots a_j)$ of elements
in $[n]$; with this notation, we have for example:
\begin{equation*}
  \{(135),(125)\}^{-} = \{(15)\} \quad \text{and} \quad 
  \{(135),(125)\}^{+} = \{(35),(13),(12),(25)\}\rlap{ .}
\end{equation*}

\begin{Defn} \label{def:orientals} The $n$th oriental $\O(n)$ is the
  strict $\omega$-category defined as follows.
  \begin{itemize}
  \item $0$-cells are natural numbers $i \in \{0, \dots, n\}$; we
    identify $i$ with the singleton subset $\{(i)\}$ of $[n]_0$.\vskip0.25\baselineskip
  \item $(j+1)$-cells $\xi$ with successive $k$-boundaries
    $(\mu_k, \pi_k)$ for each $k \leqslant j$ are finite subsets
    $\xi \subset [n]_{j+1}$ such that:\vskip0.25\baselineskip
 \begin{enumerate}[(i)]
 \item If $a \neq b \in \xi$ then
   $a^+ \cap b^+ = a^- \cap b^- = \emptyset$;
 \item $\pi_j = (\mu_j \cup {\xi}^+) \setminus {\xi}^-$ and
   $\mu_j = (\pi_j \cup {\xi}^-) \setminus {\xi}^+$ in $[n]_{j}$.
 \end{enumerate}
\end{itemize}
Given $(j+1)$-cells $\xi \colon x \to y$ and $\zeta \colon y \to z$,
their $\mm[j]$-composite is $\zeta \cup \xi \colon x \to z$; while if
the $(j+1)$-cells $\xi \colon x \to y$ and $\zeta \colon w \to z$
satisfy $t_i(\xi) = s_i(\zeta)$ for a fixed $i < j$ then their
$\mm[i]$-composite is
$\zeta \cup \xi \colon w \mm[i] x \to z \mm[i] y$. The identity
$(j+1)$-cell on the $j$-cell $x$ is given by
$\emptyset \colon x \rightarrow x$. (In particular, 
\emph{whiskering} a $k$-cell of $\O(n)$ by a lower-dimensional
cell does not change its $k$-dimensional part).
\end{Defn}

As with the coslices of Section~\ref{sec:cosl-cones-orient},
it is by no means immediate that the orientals are well-defined
$\omega$-categories; the problem is showing that the well-formedness
and movement conditions are stable under composition, and it is one of
the main theorems of~\cite{Street1987The-algebra} that this is so.
Following~\cite{Street1991Parity}, when a set $\xi \subset [n]_{j}$
satisfies the condition in (i) above, we say that $\xi$ is
\emph{well-formed}, and when it satisfies the two conditions in (ii),
we say that \emph{$\xi$ moves $\mu_j$ to $\pi_j$}, and write
$\xi \colon \mu_j \rightarrowtriangle \pi_j$; we refer to
the two conditions involved as the \emph{first} and \emph{second
  movement conditions}.

Before giving our first main result, we recall from~\cite[\S
2]{Street1987The-algebra} a useful characterisation of the $1$-cells
of the orientals:
\begin{Lemma}
  \label{lem:1}
  Any $1$-cell $\xi \colon i \rightarrow j$ in $\O(n)$ is either
  $\emptyset \colon i \rightarrow i$ or takes the form
  $\xi = \{(k_0\, k_1), (k_1\, k_2), \dots, (k_{r-1}\, k_r)\}$ for
  $i = k_0 < \dots < k_r = j$ in $[n]$.
\end{Lemma}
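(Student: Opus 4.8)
The plan is to unwind Definition~\ref{def:orientals} in the lowest nontrivial dimension and reduce the statement to an elementary fact about directed graphs. First I would compute the face operations on $1$-cells explicitly. A $1$-cell $\xi \colon i \to j$ is a finite well-formed subset $\xi \subset [n]_1$ that moves $\{(i)\}$ to $\{(j)\}$; writing each $a \in [n]_1$ as an increasing pair $(a_0\,a_1)$, one reads off directly from the definition of $\delta_k$ that $\delta_0(0) = 1$ and $\delta_1(0) = 0$, whence $a^+ = \{(a_1)\}$ and $a^- = \{(a_0)\}$. Consequently $\xi^+ = \{(a_1) : (a_0\,a_1) \in \xi\}$ and $\xi^- = \{(a_0) : (a_0\,a_1) \in \xi\}$ are, respectively, the sets of upper and lower endpoints of the pairs in $\xi$.

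Next I would extract the geometric content of the two axioms. Regarding $\xi$ as a set of directed edges $a_0 \to a_1$ on the vertex set $[n]$, the well-formedness condition (i) says exactly that the assignments $a \mapsto a_0$ and $a \mapsto a_1$ are both injective on $\xi$; equivalently, every vertex is the tail of at most one edge and the head of at most one edge. Since moreover every edge strictly increases ($a_0 < a_1$), the graph can contain no directed cycle, and a directed graph with in- and out-degree at most one and no cycles is precisely a disjoint union of directed paths. Hence $\xi$ decomposes as a disjoint union of paths $(k_0\,k_1),(k_1\,k_2),\dots$ with strictly increasing vertices, and for this decomposition $\xi^- \setminus \xi^+$ is the set of path-starts and $\xi^+ \setminus \xi^-$ the set of path-ends, each of cardinality equal to the number of paths.

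Finally I would feed this decomposition into the movement conditions. The first movement condition $\{(j)\} = (\{(i)\} \cup \xi^+) \setminus \xi^-$ forces its right-hand side to be a singleton; since that set contains the full collection of path-ends $\xi^+ \setminus \xi^-$, there can be at most one path, and so, provided $\xi \neq \emptyset$, there is exactly one. Writing that single path as $(k_0\,k_1),\dots,(k_{r-1}\,k_r)$ with $k_0 < \dots < k_r$, a short case analysis on whether $i \in \xi^-$ pins down $i = k_0$ and $j = k_r$, which is the asserted form; and when $\xi = \emptyset$ both conditions collapse to $i = j$, giving the first alternative.

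The only genuinely delicate step is this last one: translating the two movement conditions into the conclusions that exactly one path occurs and that its endpoints are $i$ and $j$. The cardinality argument bounding the number of paths is immediate, but identifying the endpoints requires care, since a priori the extra term $\{(i)\}$ could interact with $\xi^-$. Here the \emph{second} movement condition $\{(i)\} = (\{(j)\} \cup \xi^-) \setminus \xi^+$ is exactly what is needed to exclude the spurious possibility that $i$ fails to be the start of the path; everything else is a routine unwinding of definitions.
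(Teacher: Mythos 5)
Your proof is correct and is in substance the same as the paper's: both unwind well-formedness as injectivity of heads and tails and the movement conditions as boundary constraints, concluding that the edges chain into a single increasing path from $i$ to $j$. The only difference is organisational---you apply well-formedness first (obtaining a disjoint union of directed paths) and then use movement to cut the path count to one and pin the endpoints, whereas the paper applies movement first to determine each value's membership in $\xi^{\pm}$ and then counts multiplicities via well-formedness---so this is a reordering rather than a genuinely different argument.
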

\begin{proof}
  If $i \neq j$, then by movement we have
  $i \in \xi^- \setminus \xi^+$ and $j \in \xi^+ \setminus \xi^-$ and
  for all $k \neq i, j$ that $k \in \xi^-$ iff $k \in \xi^+$.
  By well-formedness, it follows that the values $i$ and $j$ appear
  \emph{exactly} once in $\xi$, as odd and even faces respectively,
  and all other values appear exactly twice, as an even and odd face
  respectively. This gives the required form; a similar argument shows
  that, when $i = j$, the only possibility is $\xi = \emptyset$.
\end{proof}

\begin{Thm} \label{thm:orientals} The $n$th cone $s^n(1)$ under the
  terminal $\omega$-category is isomorphic to the $n$th oriental
  $\O(n)$.
\end{Thm}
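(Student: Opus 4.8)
The plan is to construct an isomorphism by induction on $n$, exploiting the recursive definitions on both sides: $s^n(1) = s(s^{n-1}(1))$ by Definition~\ref{def:4}, and the orientals admit an analogous "cone" structure whereby $\O(n)$ is built from $\O(n-1)$. The crux is to recognise that the combinatorial data defining $\O(n)$ already has the shape of a collage of a coslice: the $0$-cell $n \in [n]_0$ should correspond to the cone point $\star$, and the remaining cells should split according to whether or not they "involve" the vertex $n$.

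First I would make the base case explicit: $s^0(1) = 1 = \O(0)$, both being the terminal $\omega$-category on a single $0$-cell. For the inductive step, assume an isomorphism $\Phi \colon s^{n-1}(1) \xrightarrow{\sim} \O(n-1)$ and try to promote it to $s^n(1) \cong \O(n)$. Since $s^n(1) = s(s^{n-1}(1)) = \mathrm{coll}((\thg)/s^{n-1}(1))$, by the collage description its object set is $\{0,\dots,n-1\} + \{\star\}$ and its hom-$\omega$-categories are either homs of $s^{n-1}(1)$ (for the old objects) or the coslice data $a/s^{n-1}(1)$ (for maps into $\star$). I would therefore identify the cone point $\star$ with the vertex $n \in \O(n)_0$, and identify the "old part" of $s^n(1)$ (the full subcategory on $\{0,\dots,n-1\}$) with $\O(n-1)$ via $\Phi$, using the evident inclusion $\O(n-1) \hookrightarrow \O(n)$ arising from the face $[n-1] \rightarrowtail [n]$ omitting $n$.

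The main work is the coslice part: I must exhibit a natural isomorphism between the coslice $\omega$-category $a/\O(n-1)$ (for $a \in \{0,\dots,n-1\}$) and the hom-$\omega$-category $\O(n)(a,n)$, compatibly with composition and with the module action of Definition~\ref{def:5}. A cell of $\O(n)(a,n)$ is a subset $\xi \subset [n]_{j}$ of faces running from $a$ to $n$; I would split each face of $\xi$ according to whether it contains the top vertex $n$. Those faces $b$ with $n \in b$ should be recorded, after deleting $n$, as the "decoration" component $\bar x$ of a coslice cell, while those with $n \notin b$ form a face in $\O(n-1)$ and give the underlying component $x$. The face maps $\delta_k$ and the even/odd face structure $a^{\pm}$ interact with deletion of $n$ in a controlled way (deleting the largest vertex $n = \delta_{j}$ shifts parities predictably), and this is exactly what should match the alternating whiskered pasting $\bar p_{n-1} \mm[n-1] \cdots \mm[1] x \mm[0] \bar m_0 \mm[2] \cdots$ appearing in Definition~\ref{def:2}. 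The parity-dependent "$n$ even versus $n$ odd" case split in the coslice formulae should correspond to the parity of $j$ and hence to whether $n$ is an even or odd face; Lemma~\ref{lem:1} provides the $j=1$ sanity check, where a coslice $1$-cell $(f,\bar f)$ with $f \colon i \to i'$ and $\bar f \colon \bar x \Rightarrow \bar y \mm[0] f$ matches a zig-zag path in $\O(n)$ terminating at $n$.

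\textbf{The hard part} will be verifying that this bijection on cells respects the composition operations on both sides---in particular that the cumbersome alternating whiskered composite defining $\cell y \mm \cell x$ in the coslice (Definition~\ref{def:2}) corresponds precisely to the union $\zeta \cup \xi$ of subsets in $\O(n)$. This requires checking that splitting off the top vertex $n$ is compatible with both the $\mm[j]$-composition (union along the top dimension) and the lower $\mm[i]$-compositions, and that the movement and well-formedness conditions (i), (ii) of Definition~\ref{def:orientals} translate exactly into the source/target constraints making a coslice cell well-defined. I expect the bookkeeping of signs and boundaries---confirming that the two movement conditions for $\xi$ in $[n]_{j}$ decompose into the corresponding conditions for its $n$-free part in $\O(n-1)$ together with the boundary equations for $\bar x$---to be the genuinely delicate calculation, though it is combinatorial rather than conceptual, and one checks it dimension-by-dimension using the even/odd face recursion.
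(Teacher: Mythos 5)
Your overall strategy coincides with the paper's: induct on $n$, identify the cone point $\star$ with the top vertex, and reduce everything to isomorphisms $i/\O(n) \cong \O(n+1)(i,n+1)$ compatible with the module action, obtained by splitting each subset $\xi$ according to whether its members contain the top vertex. However, as written your splitting is inverted, and the dimensions do not type-check. A $j$-cell of the hom $\O(n+1)(i,n+1)$ is a $(j+1)$-cell of $\O(n+1)$, i.e.\ a subset $\xi \subset [n+1]_{j+1}$; the corresponding coslice $j$-cell $(x,\bar x)$ has underlying component $x$ a $j$-cell of $\O(n)$, so $x \subset [n]_{j}$, and decoration $\bar x \subset [n]_{j+1}$ one dimension \emph{higher}. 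Hence the faces of $\xi$ containing the top vertex, after deleting it, land in $[n]_{j}$ and must constitute the \emph{underlying} component $x$, while the faces omitting the top vertex lie in $[n]_{j+1}$ and constitute the decoration $\bar x$ --- exactly the opposite of your assignment. (The paper's map is $(x,\bar x) \mapsto \bar x \cup x^\vee$, with $(\thg)^\vee$ appending $n+1$; your own $1$-dimensional sanity check via Lemma~\ref{lem:1} already reveals this, since in a path $i \to n+1$ the unique edge through $n+1$ records the object $x$, and the remainder of the path is the decoration $\bar x \colon i \to x$.)

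The more substantial gap is that your plan for matching the movement conditions cannot close without an auxiliary fact that must be carried through the induction: that $j$-composable cells of $\O(n)$ are disjoint as subsets (the paper's~\eqref{eq:13}). Concretely, in showing that the coslice conditions on $(x,\bar x)$ imply the single movement condition for $\bar x \cup x^\vee$, one needs $x \cap \bar p_j = \emptyset$, and this follows neither from well-formedness nor from the movement conditions alone; the paper extracts it from the fact that $\bar p_j$ and $\bar p_{j-2}\mm[j-2]\cdots\bar p_1 \mm[1] x \mm[0] \bar m_0 \cdots \mm[j-1] \bar m_{j-1}$ are $j$-composable, invoking the inductive instance of~\eqref{eq:13}. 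Street proves this disjointness inside parity-complex theory, but since the point of the exercise (cf.\ Remark~\ref{rk:1}) is to avoid that machinery and even to reprove well-definedness of $\O(n+1)$, you must prove the disjointness simultaneously with the isomorphism, as the paper does. Note also that your ``hard part'' --- matching the alternating whiskered composite with the union $\zeta \cup \xi$ --- is in fact the easy part: whiskering leaves the top-dimensional subset of a cell unchanged, so the second component of the coslice composite reduces immediately to the plain union $\bar y \cup \bar x$, and functoriality of $\phi_i$ follows in two lines.
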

\begin{proof}
  We prove this by induction on $n$, simultaneously with the result
  that:
  \begin{equation}
    \label{eq:13}
    \text{if $\xi \colon x \rightarrow y$ and $\zeta \colon y \rightarrow z$ are $(j+1)$-cells in $\O(n)$, then $\xi \cap \zeta = \emptyset$.}
  \end{equation}
  The case $n=0$ is clear. For the inductive step, we assume the
  result for $n$, and begin by showing $s(\O(n)) \cong \O(n+1)$.
  Removing $n+1$ from $\O(n+1)$ or $\star$ from $s(\O(n))$ yields in
  both cases $\O(n)$, and in both cases, the only maps from this
  removed object are identities. It thus suffices to find
  $\omega$-isomorphisms
  $\varphi_i \colon s(\O(n))(i, \star) = i / \O(n) \to \O(n+1)(i,
  n+1)$
  which are compatible with composition, in the sense that for each
  $(\cell x, h) \in j / \O(n) \times \O(n)(i, j)$, we have
  $\varphi_i(\cell x \Cdot h) = \varphi_j(\cell x) \mm[0] h$.

  First we introduce some notation. Given
  $a = (a_0 \cdots a_j) \in [n]_{j}$, we write $a^\vee$ for
  $(a_0 \cdots a_j\ n+1) \in [n+1]_{j+1}$, and given
  $\xi \subset [n]_{j}$, we write $\xi^\vee$ for
  $\{a^\vee : a \in \xi\}$. Note that for any $\xi \subset [n]_{j}$
  and $j > 0$, we have:
  \begin{equation}
    (\xi^{\vee})^+ =
    \begin{cases}
      (\xi^+)^{\vee} & \text{if $j$ even;}\\
      (\xi^+)^{\vee} \cup \xi & \text{if $j$ odd,}
    \end{cases} \quad (\xi^{\vee})^- =
    \begin{cases}
      (\xi^-)^{\vee} \cup \xi & \text{if $j$ even;}\\
      (\xi^-)^{\vee} & \text{if $j$ odd.}
    \end{cases}\label{eq:10}
  \end{equation}
  We now define $\varphi_i \colon i / \O(n) \to \O(n+1)(i, n+1)$ on
  cells of all dimension by
  \[
    \phi_i(\cell x) =
    \begin{cases}
      {\bar x} \cup x^\vee \colon i \to n+1 & \text{for $\cell x$ a $0$-cell;}\\
      {\bar x} \cup x^\vee \colon\phi_i(\cell m) \to \phi_i(\cell p) &
      \text{for $\cell x \colon \cell m \rightarrow \cell p$ a
        $(j+1)$-cell.}
    \end{cases}
  \]

We will show by induction on dimension that this assignation is
well-defined and bijective. For the base case, we use
Lemma~\ref{lem:1}. Any $0$-cell $(x, \bar x \colon i \rightarrow x)$
of $i / \O(n)$ has either $i = x$ and $\bar x = \emptyset$---in which
case $\varphi_i(\cell x) = \{(i\,\, n+1)\} \colon i \rightarrow n+1$
is well-defined---or has $i < x$ and
$\bar x = \{(i\, k_1), \dots, (k_{r-1}\, x)\}$---in which case
$\phi_i(\cell x) = \{(i\, k_1), \dots, (k_{r-1}\, x), (x\,\, n+1)\}
\colon i \rightarrow n+1$
is again well-defined. In fact, by Lemma~\ref{lem:1}, \emph{any}
$\xi \colon i \rightarrow n+1$ in $\O(n+1)$ is uniquely of one of the
two forms just listed, so that $\varphi_i$ is a bijection on
$0$-cells.

Suppose now that we have shown that $\varphi_i$ is well-defined and
bijective on all cells up to dimension $j$; then for any parallel pair
of $j$-cells $(\cell m_j, \cell p_j)$ in $i / \O(n)$ with successive
boundaries $(\cell m_k, \cell p_k)$ for $k<j$, we will show that
$\phi_i$ gives a well-defined bijection between cells
$\cell x \colon \cell m_j \rightarrow \cell p_j$ and ones
$\xi \colon \phi_i(\cell m_j) \rightarrow \phi_i(\cell p_j)$. We
consider only the case where $j$ is odd; the even case is identical in
form, and so omitted. Observe first that the operation on subsets
\begin{equation}
  \label{eq:11}
  \begin{aligned}
    \P [n]_{j+1} \times \P [n]_{j+2} & \rightarrow \P [n+1]_{j+2}  \\
    (x, \bar x) & \mapsto \bar x \cup x^\vee
  \end{aligned}  
\end{equation}
underlying $\phi_i$'s action on $(j+1)$-cells is bijective. A cell
$\cell x \colon \cell m_j \rightarrow \cell p_j$ is an element
$(x, \bar x)$ of the domain of~\eqref{eq:11} satisfying the three
conditions that:
\begin{equation*}
  \text{(i) $x$ and $\bar x$ are well-formed;} \qquad 
  \text{(ii)  $x \colon m_j\rightarrowtriangle p_j$;} \qquad 
  \text{(iii) $\bar x \colon \bar m_j \rightarrowtriangle \bar p_j \cup x$,}
\end{equation*}
while a cell
$\phi_i(\cell m_j) \rightarrow \phi_i(\cell p_j)$ is an element
$\bar x \cup x^\vee$ of the codomain such that:
\begin{equation*}
\text{(iv) $\bar x \cup x^\vee$ is well-formed;} \qquad 
\text{(v) $\bar x \cup x^\vee \colon \bar m_j \cup m_j^\vee \rightarrowtriangle \bar p_j
\cup p_j^\vee$.}
\end{equation*}
Thus to check well-definedness and bijectivity of $\varphi_i$ on
$(j+1)$-cells, it suffices to show that (i)--(iii) are equivalent to
(iv) \& (v). 

Now, if $\bar x \cup x^\vee$ is well-formed then clearly so is
$\bar x$, but in fact also $x$, as if $a \neq b \in x$ shared an even
or odd face, then so would $a^\vee \neq b^\vee$ in
$x^\vee \subset \xi$. Thus (iv) implies (i). Conversely, if $x$ and
$\bar x$ are well-formed, then both components of $\bar x \cup x^\vee$
are individually well-formed, while if $a \in {\bar x}$ and
$b^\vee \in x^\vee$, then clearly $a$ and $b^\vee$ share no even faces
since $n+1 \notin a$, and could only share an odd face if $b \in x$
were an odd face of $a \in {\bar x}$; and this is impossible if
${\bar x} \colon {\bar m_j} \rightarrowtriangle {\bar p_j} \cup x$ since then
$({\bar p_j} \cup x) \cap {\bar x}^- = \emptyset$. So (i) and (iii)
imply (iv).

Turning now to the movement conditions, we have 
$(\bar x \cup x^\vee)^+ = {\bar x}^+ \cup (x^{+})^\vee$ and
$(\bar x \cup x^\vee)^- = {\bar x}^- \cup x \cup (x^-)^\vee$ by~\eqref{eq:10}; so (v)
is equivalent to:
 \begin{align*}
 ({\bar p_j} \cup p_j^{\vee}) & = ({\bar m_j} \cup m_j^{\vee})
 \cup ({\bar x}^+ \cup (x^{+})^\vee) \setminus ({\bar x}^- \cup x \cup (x^{-})^\vee
 )  \\
({\bar m_j} \cup m_j^{\vee}) &= ({\bar p_j} \cup p_j^{\vee})
 \cup ({\bar x}^- \cup x \cup (x^{-})^\vee) \setminus ({\bar x}^+ \cup
 (x^{+})^\vee)\rlap{ ;}
 \end{align*}
 now as the terms which are under $(\thg)^\vee$ are disjoint from
 those which are not, and $(\thg)^\vee$ is a bijection, this is
 equivalent to the four conditions:
 \begin{equation}
 \begin{aligned}
   p_j & = m_j \cup x^+ \setminus x^- & \qquad {\bar p_j} &= {\bar m_j} \cup {\bar x}^+ \setminus ({\bar x}^- \cup x) \\
m_j & = p_j \cup x^- \setminus x^+ & \qquad \bar m_j & = \bar p_j \cup {\bar x}^- \cup x \setminus {\bar x}^+\rlap{ .}
 \end{aligned}\label{eq:12}
\end{equation}
The left two are precisely (ii), and the lower right is the second
movement condition for (iii). The upper right will imply the first
movement condition
$\bar p_j \cup x = \bar m_j \cup {\bar x}^+ \setminus \bar x^-$ for (iii)
so long as $\bar x^- \cap x = \emptyset$; but this is certainly the
case if $\bar x \cup x^\vee$ is well-formed, as if $a \in \bar x$ had
an odd face $b$ in $x$, then $a \neq b^\vee$ would share an odd face
in $\bar x \cup x^\vee$. So (iv) and (v) imply (ii) and (iii).
Finally, since (v) is equivalent to the conditions in~\eqref{eq:12},
it will follow from (ii) and (iii) so long as we know that $x \cap
\bar p_j = \emptyset$. Now, observe that $\bar x$ is a cell
\begin{equation*}
  \bar m_j \rightarrow \bar p_j \mm[j] \bar p_{j-2} \mm[j-2] \cdots \bar p_1 \mm[1] x \mm[0] \bar m_0  \cdots \mm[j-1] \bar m_{j-1}
\end{equation*}
so that in particular, the cells $\bar p_j$ and
$\bar p_{j-2} \mm[j-2] \cdots \bar p_1 \mm[1] x \mm[0] \bar m_0 \cdots
\mm[j-1] \bar m_{j-1}$
of $\O(n)$ are $j$-composable; applying the inductive instance
of~\eqref{eq:13} for $\O(n)$ we conclude that
$x \cap \bar p_j = \emptyset$ as required.

This show that each $\varphi_i$ is a bijective map on cells of all
dimension; it remains only to show $\omega$-functoriality and
compatibility with the actions by $\Cdot$ and $\mm[0]$. If $\cell x$
and $\cell y$ are $(j+1)$-cells of $i /\O(n)$ with
$t_\ell(\cell x) = s_\ell(\cell y)$ and with common boundary
$(\cell m_k, \cell p_k)$ for each $k < \ell$, then:
  \begin{align*}
  \phi_i(\cell y \mm[\ell] \cell x) 
  & = \phi_i(y \mm[\ell] x,\, \bar y \mm[\ell+1] \bar p_{\ell-1} \mm[\ell-1] \cdots \bar p_1 \mm[1] s_{\ell+1}y \mm[0] \bar m_0 \cdots \mm[\ell-2] \bar m_{\ell-2} \mm[\ell] {\bar x})\\
  & = \phi_i(y \cup x, \bar y \cup \bar x)
%  & = \phi_i(y \mm x,\, \bar y \mm[n+1] \bar p_{n-1} \mm[n-1] \cdots \mm[n-2] \bar m_{n-2} \mm {\bar x}) \\
%  & = (y \mm x)^\vee \cup (\bar y \mm[n+1] \bar p_{n-1} \mm[n-1] \cdots \mm[n-2] \bar m_{n-2} \mm {\bar x}) \\
  = (\bar y \cup {\bar x}) \cup (y \cup x)^\vee \\ 
  & = (x^\vee \cup {\bar x}) \cup (y^\vee \cup {\bar y})  = \phi_i(\cell y) \mm \phi_i(\cell x)
 \end{align*}
 when $\ell$ is even, and correspondingly when $\ell$ is odd. As for
 identity morphisms, we have
 $\phi_i(i\cell x) = \phi_i(ix, i{\bar x}) = \emptyset \cup
 \emptyset^\vee = \emptyset = i\phi_i(\cell x)$;
 so $\phi_i$ is $\omega$-functorial as required. To show compatibility
 of the $\varphi_i$'s with composition, we argue similarly that
 $ \phi_i(\cell x \Cdot h) = \phi_i(x, {\bar x} \mm[0] h) = ({\bar x}
 \cup h) \cup x^\vee = ({\bar x} \cup x^\vee) \cup h = \phi_j(\cell x)
 \mm[0] h$.

 This proves that $s(\O(n)) \cong \O(n+1)$, and it remains only to
 derive~\eqref{eq:13} for $\O(n+1)$. The case of $0$-composable
 $1$-cells is easy from Lemma~\ref{lem:1}, while any pair of
 $(j+1)$-composable $(j+2)$-cells must live in some
 hom-$\omega$-category of $\O(n+1)$; the only new case to consider is
 that of $\O(n+1)(i, n+1) \cong i / \O(n)$. For this, let
 $\cell x \colon \cell a \rightarrow \cell b$ and
 $\cell y \colon \cell b \rightarrow \cell c$ be $(j+1)$-cells in
 $i / \O(n)$ with common boundary $(\cell m_k, \cell p_k)$ for all
 $k < j$. We have $x \colon a \rightarrow b$ and
 $y \colon b \rightarrow c$ in $\O(n)$, whence $x \cap y = \emptyset$
 by~\eqref{eq:13} for $\O(n)$; moreover, assuming $j$ is even, we have
 that $\bar y$ and
 $\bar p_{j-1} \mm[j-1] \cdots \bar p_1 \mm[1] y \mm[0] \bar m_0
 \cdots \mm[j-2] \bar m_{j-2} \mm[j] {\bar x}$
 are $(j+1)$-composable $(j+2)$-cells, whence
 $\bar x \cap \bar y = \emptyset$ again by~\eqref{eq:13} for $\O(n)$;
 a similar argument shows $\bar x \cap \bar y = \emptyset$ when $j$ is
 odd. We conclude that the composable pair
 $\phi_i(\cell x) = \bar x \cup x^\vee$ and
 $\phi_i(\cell y) = \bar y \cup y^\vee$ satisify
 $(\bar x \cup x^\vee) \cap (\bar y \cup y^\vee) = \emptyset$, as
 required.
\end{proof}

\begin{Rk}
  \label{rk:1}
  The condition~\eqref{eq:13} on composition of cells in orientals is
  proved by Street in~\cite[Theorem~3.12]{Street1987The-algebra}; by
  not simply quoting his result, we have avoided using any aspect of
  the theory of orientals beyond the basic definitions, and this
  allows our main theorem to provide an alternative and simpler proof
  that the orientals do indeed have a well-defined composition.
  Arguing inductively, once we know that $\O(n)$ is well-defined, then
  so too is $s(\O(n))$; now transporting across the isomorphism of
  globular sets $s(\O(n)) \cong \O(n+1)$ shows that composition in
  $\O(n+1)$ is also well-defined.
\end{Rk}

\section{Cubes}\label{sec:cubes}
We now turn to our second main result, which will identify the
iterated cylinders on the terminal $\omega$-category with the
\emph{cubes}. We begin by recalling their definition, following again
the presentation of~\cite{Street1991Parity}. Given natural numbers $n$
and $j$, we write $\dbr{n}$ for the set of strings of length $n$ in
the symbols $\MM$, $\DD$, and $\PP$, and write $\dbr{n}_j$ for the
subset of such strings in which the symbol $\DD$ appears exactly $j$
times. If $j > 0$ and $a \in \dbr{n}_j$, then we define the sets
$a^-, a^+ \subset \dbr{n}_{j-1}$ of \emph{odd} and \emph{even} faces by:
\begin{equation*}
  a^- = \{a\delta_i^- : 1 \leqslant i \leqslant j\}\qquad \text{and} \qquad 
  a^+ = \{a\delta_i^+ : 1 \leqslant i \leqslant j\}
\end{equation*}
where $a\delta^-_i$ denotes the string obtained from $a$ by replacing
the $i$th occurence of $\DD$ therein by either $\MM$ or $\PP$
according as $i$ is odd or even, and where $a\delta^+_i$ denotes
similarly the string obtained by replacing the $i$th $\DD$ by either
$\PP$ or $\MM$ according as $i$ is odd or even. Like before, for any
$\xi \subset \dbr{n}_j$ we define $\xi^- = \bigcup_{a \in \xi} a^-$
and $\xi^+ = \bigcup_{a \in \xi} a^+$; with this notation, we have,
for example, that:
\begin{equation*}
  \{\MM\DD\DD\}^+ = \{\MM\PP\DD, \MM\DD\MM\} \qquad \text{and} \qquad \{\MM\DD,\DD\MM\}^- = \{\MM\MM\}\rlap{ .}
\end{equation*}

\begin{Defn} \label{def:cubes}
The $n$th cube $\Q(n)$ is 
is the strict $\omega$-category defined as follows.
  \begin{itemize}
  \item $0$-cells are elements of $\dbr{n}_0$: strings of length
    $n$ of $\MM$'s and $\PP$'s. We identify each such string
    with the corresponding singleton subset of $\dbr{n}_0$.
\vskip0.25\baselineskip
  \item $(j+1)$-cells $\xi$ with successive $k$-boundaries
    $(\mu_k, \pi_k)$ for each $k \leqslant j$ are finite subsets
    $\xi \subset \dbr{n}_{j+1}$ such that:\vskip0.25\baselineskip
 \begin{enumerate}[(i)]
 \item If $a \neq b \in \xi$ then
   $a^+ \cap b^+ = a^- \cap b^- = \emptyset$;
 \item $\pi_j = (\mu_j \cup {\xi}^+) \setminus {\xi}^-$ and
   $\mu_j = (\pi_j \cup {\xi}^-) \setminus {\xi}^+$ in $\dbr{n}_j$.
 \end{enumerate}
\end{itemize}
Given $(j+1)$-cells $\xi \colon x \to y$ and $\zeta \colon y \to z$,
their $\mm[j]$-composite is $\zeta \cup \xi \colon x \to z$; while if
the $(j+1)$-cells $\xi \colon x \to y$ and $\zeta \colon w \to z$
satisfy $t_i(\xi) = s_i(\zeta)$ for a fixed $i < j$ then their
$\mm[i]$-composite is
$\zeta \cup \xi \colon w \mm[i] x \to z \mm[i] y$. The identity
$(j+1)$-cell on the $j$-cell $x$ is given by
$\emptyset \colon x \rightarrow x$. 
\end{Defn}

%This is illustrated below for the square $\Q(2)$ 
%$$
%\cd[@C+2em@R+2em]{
%\MM\MM \ar[r]^{\DD\MM} \ar[d]_{\MM\DD} \rtwocell{dr}{\DD\DD} &
%\PP\MM \ar[d]^{\PP\DD} \\
%\MM\PP \ar[r]_{\DD\PP} &
%\PP\PP
%}
%$$

Note that this definition is identical to
Definition~\ref{def:orientals} except that $[n]_j$ is replaced by
$\dbr{n}_j$ and the meaning of $(\thg)^+$ and $(\thg)^-$ adapted
accordingly. This is because both are instances of the general
definition in~\cite{Street1991Parity} of the \emph{free
  $\omega$-category on a parity complex}; the basic data of a parity
complex are sets like $[n]_j$ or $\dbr{n}_j$ equipped with functions
$(\thg)^+$ and $(\thg)^-$ satisfying axioms. As before, it is quite
non-trivial that the cubes are well-defined $\omega$-categories, and
as before, we will be able to deduce this well-definedness from the
inductive argument we give.

As before, we refer to the conditions in (i) and (ii) above as
\emph{well-formedness} and \emph{movement}, and with the same
notational conventions. Exactly the same argument as in
Lemma~\ref{lem:1} now shows that:
\begin{Lemma}
  \label{lem:2}
  Any $1$-cell $\xi \colon a \rightarrow b$ in $\Q(n)$ is of the form
  $\xi = \{f_1, \dots, f_r\}$, where either $r = 0$ and $a = b$, or
  $r > 0$, $f_1^- = a$, $f_i^+ = f_{i+1}^-$ for all $1 < i < r$ and
  $f_r^+ = b$.
\end{Lemma}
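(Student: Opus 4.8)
The plan is to follow the pattern of Lemma~\ref{lem:1}, reading the elements of $\dbr{n}_1$ as directed edges on the vertex set $\dbr{n}_0$. Since a string $f \in \dbr{n}_1$ contains a single $\DD$, its only odd face $f^-$ is obtained by turning that $\DD$ into an $\MM$, and its only even face $f^+$ by turning it into a $\PP$; thus each $f \in \dbr{n}_1$ may be viewed as an edge from $f^-$ to $f^+$, these two strings differing precisely in the coordinate occupied by the $\DD$. A $1$-cell $\xi \colon a \rightarrow b$ is then a well-formed subset $\xi \subset \dbr{n}_1$ moving $a$ to $b$, and the claim is exactly that such a $\xi$ is either empty (with $a = b$) or a single directed path from $a$ to $b$.

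First I would record the graph-theoretic content of well-formedness: condition~(i) asserts that no two edges of $\xi$ share an odd face and no two share an even face, so every vertex of $\dbr{n}_0$ is the source $f^-$ of at most one edge of $\xi$ and the target $f^+$ of at most one edge. Hence $\xi$, regarded as a directed graph, has in-degree and out-degree at most one at every vertex, and so decomposes as a disjoint union of simple directed paths and simple directed cycles.

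The one point where the cube argument genuinely departs from the oriental one is the exclusion of cycles; for $\O(n)$ this was automatic from the order on $[n]$, whereas here I would use the observation that traversing any edge $f$ strictly increases the number of $\PP$'s, since $f^+$ has exactly one more $\PP$ than $f^-$. This furnishes a strictly monotone potential along directed edges, so $\xi$ can contain no directed cycle, and is therefore a disjoint union of directed paths. This cycle-exclusion is the only mild obstacle, and the monotone $\PP$-count dispatches it cleanly.

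It then remains to feed in the movement conditions to pin down the paths, exactly as in Lemma~\ref{lem:1}. Analysing the equations $b = (a \cup \xi^+)\setminus \xi^-$ and $a = (b \cup \xi^-)\setminus \xi^+$ vertex by vertex shows that any vertex which is a target but not a source must equal $b$, any vertex which is a source but not a target must equal $a$, and any vertex which is both (an interior vertex of a path) equals neither. Since each path component contributes exactly one source-only endpoint and one target-only endpoint, and these endpoints are distinct across components by well-formedness, there can be at most one path component, with initial vertex $a$ and terminal vertex $b$. This yields the stated form $\xi = \{f_1, \dots, f_r\}$ with $f_1^- = a$, $f_i^+ = f_{i+1}^-$ for $1 \leqslant i < r$, and $f_r^+ = b$, the empty case recovering $r = 0$ with $a = b$.
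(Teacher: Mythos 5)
Your proof is correct and takes essentially the same route as the paper, whose proof of Lemma~\ref{lem:2} is simply to rerun the argument of Lemma~\ref{lem:1}: well-formedness makes the odd- and even-face assignments injective on $\xi$ (in/out-degree at most one), and a vertexwise reading of the two movement conditions identifies the source-only and target-only vertices as $a$ and $b$. The one step you spell out that the paper's ``exactly the same argument'' leaves implicit is the exclusion of directed cycles via the strictly increasing $\PP$-count --- the cube analogue of the linear order on $[n]$ that silently does this job for $\O(n)$, and the same invariant the paper later invokes when proving disjointness of $0$-composable $1$-cells in Theorem~\ref{thm:cubes} --- which is a worthwhile clarification but not a different proof.
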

With this in place, we are ready to give the proof of our second main
result, which follows a very similar pattern to the first.

\begin{Thm} \label{thm:cubes}
The $n$th cylinder $c^n(1)$ on the terminal $\omega$-category is isomorphic to the $n$th cube $\Q(n)$.
\end{Thm}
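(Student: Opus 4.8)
The plan is to mirror the proof of Theorem~\ref{thm:orientals} as closely as possible, proving $c^n(1) \cong \Q(n)$ by induction on $n$, simultaneously with the disjointness statement that any two $(j+1)$-composable cells $\xi, \zeta$ in $\Q(n)$ satisfy $\xi \cap \zeta = \emptyset$. The base case $n = 0$ is immediate. For the inductive step, I assume $c^n(1) \cong \Q(n)$ together with the disjointness property, and I must establish $c(\Q(n)) \cong \Q(n+1)$. The key structural observation is that the cylinder $c(X)$ is the collage of the bimodule $(\thg)/X/(\thg)$, so its objects are $X_0 + X_0$ (two ``copies'' of the $0$-cells of $\Q(n)$) and its non-trivial homs are $\Q(n)(a,b)$, $\Q(n)(a,b)$ again, and the bislice $a/\Q(n)/b$. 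On the cube side, $\dbr{n+1}_j$ decomposes according to the value of the \emph{last} symbol: strings ending in $\MM$, strings ending in $\PP$, and strings ending in $\DD$. The first two families recover the two copies of $\Q(n)$; the third family, built from strings ending in $\DD$, should match the bislice cells $a/\Q(n)/b$. So the essential content is to construct isomorphisms $\psi_{a,b} \colon a/\Q(n)/b \to \Q(n+1)(a\MM, b\PP)$ (with appropriate placement of the decorations $\MM,\PP$ on the two endpoint copies) that are compatible with the left and right bimodule actions inherited from the slice and coslice.

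First I would set up the combinatorial bookkeeping analogous to the map $(\thg)^\vee$ in the orientals proof. Here, to a string $a \in \dbr{n}_j$ I would associate $a\DD \in \dbr{n+1}_{j+1}$ (appending a $\DD$), and extend this to subsets; I would then record how appending $\DD$ interacts with faces, i.e. compute $(a\DD)^+$ and $(a\DD)^-$. Since the new $\DD$ is in the last (here the $(j+1)$st) position, replacing it contributes $a\PP$ or $a\MM$ to the faces, with the even/odd assignment governed by the parity of $j+1$; the faces coming from the pre-existing $\DD$'s of $a$ give $(a^+)\DD$ and $(a^-)\DD$. This yields cube-analogues of formulas~\eqref{eq:10}, which will differ from the orientals case in a bookkeeping detail: appending $\DD$ in last position means the new face is of the \emph{opposite} parity to what it would be in first position, so the extra term $a\MM$ or $a\PP$ lands in $(a\DD)^+$ or $(a\DD)^-$ according to the parity of $j$. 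I would then define $\psi_{a,b}$ on a bislice cell $\cell x = (x, \bar x, \hat x)$ to send it to the union of $\bar x$, $\hat x$, and $x\DD$ (suitably decorated), and verify by induction on dimension that this is a well-defined bijection by translating well-formedness and both movement conditions across the map, exactly as in conditions (i)--(v) and equations~\eqref{eq:11}--\eqref{eq:12} of the orientals proof, invoking the inductive disjointness instance~\eqref{eq:13} for $\Q(n)$ at the one point where it is needed.

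The genuine new difficulty, relative to the cones argument, is \emph{two-sidedness}. In Theorem~\ref{thm:orientals} the collage $s(\O(n))$ has only a single new object $\star$ and the coslice carries a single right action, so one checks compatibility with $\Cdot$ alone. For the cylinder, the collage has \emph{two} families of old objects and the bislice $a/\Q(n)/b$ carries both a right $\Q(n)$-action (from $a/\Q(n)$) and a left $\Q(n)$-action (from $\Q(n)/b$); so $\psi_{a,b}$ must be checked compatible with both, and the $\omega$-functoriality verification must handle composites of bislice cells as well as whiskering on each side. I expect this to be the main obstacle, though conceptually it is not deep: the bislice composition is defined dually on the two sides, its effect on underlying cells is again just union, and so each compatibility reduces to a disjointness bookkeeping argument of the same shape as the displayed computation of $\phi_i(\cell y \mm[\ell] \cell x)$ in the orientals proof, now carried out symmetrically for the $\bar{(\thg)}$ and $\hat{(\thg)}$ components. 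Having established $c(\Q(n)) \cong \Q(n+1)$ as $\omega$-categories, I would finish by deriving~\eqref{eq:13} for $\Q(n+1)$: for cells in the two $\Q(n)$-copies this is the inductive hypothesis, for cells meeting a bislice hom it follows from disjointness of the $x\DD$ parts together with the established disjointness of the $\bar x, \hat x$ parts in $\Q(n)$, exactly paralleling the closing paragraph of the orientals proof. As with Remark~\ref{rk:1}, this simultaneously gives an independent proof that $\Q(n+1)$ has well-defined composition.
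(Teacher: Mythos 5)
Your proposal matches the paper's proof in all essentials: the same simultaneous induction with the disjointness property, the same decomposition of $\dbr{n+1}_j$ by final symbol realising the two copies of $\Q(n)$ via $(\thg)\MM$ and $(\thg)\PP$, the same map $\cell x \mapsto \bar x\MM \cup x\DD \cup \hat x\PP$ with the face formulas~\eqref{eq:14}, and the same two-sided compatibility checks you correctly flag as the new feature over the cones argument. The only slight inaccuracy is that the inductive disjointness instance is needed at \emph{two} symmetric points (to get $x \cap \bar p_j = \emptyset$ for the $\bar{(\thg)}$-component and $\hat m_j \cap x = \emptyset$ for the $\hat{(\thg)}$-component), not one, but your symmetric treatment of the two components covers this.
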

\begin{proof}
  First we introduce some notation. Given
 $a = a_1 \, \cdots \, a_n \in \dbr{n}$, we write $a\eta$ for
 $a_1 \, \cdots \, a_n\eta  \in \dbr{n+1}$ where $\eta \in \{\MM,\DD,\PP\}$, and given
 $\xi \subset \dbr{n}$, we write $\xi\eta$ for
 $\{a\eta : a \in \xi\}$. Note that for any $\xi \subset \dbr{n}_{j}$
 and $j > 0$, we have that:
\begin{equation}\label{eq:14}
(\xi\DD)^+ =
\begin{cases}
  (\xi^+)\DD \cup \xi\PP & \text{if $j$ even;}\\
  (\xi^+)\DD \cup \xi\MM & \text{if $j$ odd,}
\end{cases} \quad
(\xi\DD)^- =
\begin{cases}
  (\xi^-)\DD \cup \xi\MM & \text{if $j$ even;}\\
  (\xi^-)\DD \cup \xi\PP & \text{if $j$ odd,}
\end{cases}
\end{equation}
and that $({\xi \eta})^{\epsilon} = (\xi^{\epsilon}) \eta $ for any
$\eta \in \{\PP,\MM\}$ and $\epsilon \in \{+,-\}$. We now prove the
result by induction on $n$, simultaneously with the result that:
  \begin{equation}
    \label{eq:17}
    \text{if $\xi \colon x \rightarrow y$ and $\zeta \colon y \rightarrow z$ are $(j+1)$-cells in $\Q(n)$, then $\xi \cap \zeta = \emptyset$.}
  \end{equation}

  The case $n=0$ is clear. For the inductive step, we assume the
  result for $n$, and begin by showing $c(\Q(n)) \cong \Q(n+1)$.
  Recall that $c(\Q(n))$ is the collage of the bimodule
  $(\thg)/\Q(n)/(\thg)$ determined by bislice and thus contains two
  copies of $\Q(n)$ embedded on the left and right which we call
  $\Q(n)_l$ and $\Q(n)_r$. These can be mapped into $\Q(n+1)$ via
  $\omega$-functors $(\thg)\MM \colon \Q(n)_l \to \Q(n+1)$ and
  $(\thg)\PP \colon \Q(n)_r \to \Q(n+1)$ which are easily shown to be
  bijective on hom-$\omega$-categories and jointly bijective on
  $0$-cells. In this way, we determine all of the data for an
  $\omega$-isomorphism $\varphi \colon c(\Q(n)) \to \Q(n+1)$ except
  for the action on hom-$\omega$-categories
  $c(\Q(n))(a,b) = a / \Q(n) /b$ when $a \in \Q(n)_l$ and
  $b \in \Q(n)_r$. To give this action is equally to give
  $\omega$-isomorphisms
  \[\varphi_{a,b} \colon c(\Q(n))(a, b) = a / \Q(n) / b \to
    \Q(n+1)(a\MM, b\PP)\]
  which are compatible with composition, in the sense that we have
  $\varphi_{a,d}(k \Cdot \cell x \Cdot h) = \varphi_{c,d}(k) \mm[0]
  \varphi_{b,c}(\cell x) \mm[0] \varphi_{a,b}(h)$
  for each
  $(k, \cell x, h) \in \Q(n)(c, d) \times b / \O(n) / c \times
  \Q(n)(a, b) $.

  We will define $\varphi_{a,b}$ on cells of all dimension by:
\[
\phi_{a,b}(\cell x) =
\begin{cases}
{\bar x}\MM \cup {x}\DD \cup {{\hat x}}\PP \colon a\MM \to b\PP & \text{for $\cell x$ a $0$-cell;}\\
{{\bar x}}\MM \cup {x}\DD \cup {{\hat x}}\PP \colon
\phi_{a,b}({\cell m}) \to \phi_{a,b}({\cell p}) &
  \text{for $\cell x \colon \cell m \rightarrow \cell p$ a $(j+1)$-cell;}
\end{cases}
\]
for example, the action on $0$- and $1$-cells is as in the following diagram:
\begin{equation*}
\cd[@C+2em@R-0.5em@-.4em]{
 & 
m_0\MM \ar[dd]|{x\MM} \ar[r]^{m_0\DD} & 
m_0\PP \ar[dd]|{x\PP} \ar@/^.6em/[dr]^{{\hat m}_0\PP} & \\
a\MM \ar@/^.6em/[ur]^{{\bar m_0}\MM} \ar@/_.6em/[dr]_{{\bar p_0}\MM} 
\dtwocell[0.55]{r}{{\bar x}\MM} & 
\dtwocell[0.45]{r}{x\DD}  & 
\dtwocell[0.35]{r}{{\hat x}\PP} & 
b\PP \rlap{ .}\\
& 
p_0\MM \ar[r]_{p_0\DD} & 
p_0\PP \ar@/_.6em/[ur]_{{\hat p}_0\PP} & 
  }
\end{equation*}

We will show by induction on dimension that this assignation is
well-defined and bijective. For the base case, given a $0$-cell
$(x, \bar x, \hat x)$ of $a / \Q(n) / b$ we may write
$\bar x = \{f_1, \dots, f_r\} \colon a \rightarrow x$ and
$\hat x = \{g_1, \dots, g_s\} \colon x \rightarrow b$ with the
$f_i$'s and $g_k$'s satisfying the conditions of Lemma~\ref{lem:2};
since $({\xi \eta})^{\epsilon} = (\xi^{\epsilon}) \eta $ for any
$\eta \in \{\PP,\MM\}$ and $\epsilon \in \{+,-\}$, it follows that
\begin{equation*}
  \{f_1\MM, \dots, f_r\MM, x\DD, g_1\PP, \dots, g_s\PP\} \colon a \MM \rightarrow b \PP
\end{equation*}
is a well-defined $1$-cell of $\Q(n+1)$; in fact, it is easy to see
from Lemma~\ref{lem:2} that \emph{any}
$\xi \colon a\MM \rightarrow b\PP$ in $\Q(n+1)$ is of this form for a
unique $(x, \bar x, \hat x)$, and so $\varphi_{a,b}$ is not only
well-defined but also bijective on $0$-cells.

Suppose now that we have shown $\varphi_{a,b}$ is well-defined and
bijective on all cells up to dimension $j$; then for any parallel pair
of $j$-cells $(\cell m_j, \cell p_j)$ in $a / \O(n) / b$ with successive
boundaries $(\cell m_k, \cell p_k)$ for $k<j$, we will show that
$\phi_{a,b}$ gives a well-defined bijection between cells
$\cell x \colon \cell m_j \rightarrow \cell p_j$ and ones
$\xi \colon \phi_{a,b}(\cell m_j) \rightarrow \phi_{a,b}(\cell p_j)$. We
consider only the case where $j$ is odd; the even case is identical in
form, and so omitted. Observe first that the operation on subsets
\begin{equation}\label{eq:15}
  \begin{aligned}
    \P \dbr{n}_{j+1} \times \P \dbr{n}_{j+2} \times \P \dbr{n}_{j+2} & \rightarrow \P \dbr{n+1}_{j+2}  \\
    (x, \bar x, \hat x) & \mapsto {\bar x}\MM \cup x \DD \cup {\hat x}\PP
  \end{aligned}  
\end{equation}
underlying $\phi_{a,b}$'s action on $(j+1)$-cells is bijective. A cell
$\cell x \colon \cell m_j \rightarrow \cell p_j$ is an element
$(x, \bar x, \hat x)$ of the domain of~\eqref{eq:15} satisfying the four
conditions that:
\begin{align*}
  \text{(i)} & \text{  $x$, $\bar x$ and $\hat x$ are well-formed;} &
  \text{(iii)} & \text{  $\bar x \colon \bar m_j \rightarrowtriangle \bar p_j \cup x$;} \\ 
  \text{(ii)} & \text{  $x \colon m_j\rightarrowtriangle p_j$;} &
  \text{(iv)} & \text{ $\hat x \colon \hat m_j \cup x \rightarrowtriangle \hat p_j$,}
\end{align*}
while a cell
$\phi_{a,b}(\cell m_j) \rightarrow \phi_{a,b}(\cell p_j)$ is an element
${\bar x}\MM \cup x \DD \cup {\hat x}\PP$ of the codomain satisfying
the two conditions that:
\begin{align*}
\text{(v)} & \text{ ${\bar x}\MM \cup x \DD \cup {\hat x}\PP$ is well-formed;} \\ 
\text{(vi)} & \text{ ${\bar x}\MM \cup x \DD \cup {\hat x}\PP \colon 
{\bar m_j}\MM \cup m_j \DD \cup {\hat m_j}\PP \rightarrowtriangle 
{\bar p_j}\MM \cup p_j \DD \cup {\hat p_j}\PP$.}
\end{align*}
Thus to check well-definedness and bijectivity of $\varphi_{a,b}$ on
$(j+1)$-cells, it suffices to show that (i)--(iv) are equivalent to
(v) \& (vi). 

Now, if ${\bar x}\MM \cup x \DD \cup {\hat x}\PP$ is well-formed then
so are $\bar x$, $x$ and $\hat x$, since if $a \neq b \in x$ shared a
positive or negative face, then so would $a\DD \neq b\DD$ in
$x\DD\subset\xi$, and correspondingly for $\bar x$ and $\hat x$; so
(v) implies (i). Conversely, if $x$, $\bar x$ and $\hat x$ are
well-formed, then each component of
${\bar x}\MM \cup x \DD \cup {\hat x}\PP$ is individually well-formed,
and so it remains to check the cross-terms. First, $a\in{{\bar x}}\MM$
and $b\in{{\hat x}}\PP$ cannot share \emph{any} face, since its final
symbol would be $\MM$ and $\PP$ simultaneously. Next, if
$a\in{{\bar x}}\MM$ and $b\in{x}\DD$ then
$a^- \subset {{\bar x}^-}\MM$ and $b^- \subset {x}\MM \cup {x^-}\DD$;
but (iii) ensures that ${\bar x}^-$ and $x$ are disjoint so
$a^- \cap b^- =\emptyset$. Likewise $a^+ \subset {{\bar x}^+}\MM$ and
$b^+ \subset {x}\PP \cup {x^+}\DD$ and so $a^+ \cap b^+ = \emptyset$.
A similar argument shows that $a\in{{\hat x}}\PP$ and $b\in{x}\DD$
cannot share an odd face or an even face, and so (i) and (iii) imply (v).

Turning now to the movement conditions, we have 
$(x\DD)^+ = x^+\DD \cup x\PP$ and
$(x\DD)^- = x^-\DD \cup x\MM$ by~\eqref{eq:14}; so (vi)
is equivalent to:
 \begin{align*}
{\bar p}_j\MM \cup p_j\DD \cup {\hat p}_j\PP & = 
({\bar m}_{j}\MM \cup m_j\DD \cup {\hat m}_j\PP) \cup
({\bar x}^+\MM \cup x^+\DD \cup x\PP \cup {\hat x}^+\PP) \\
& \hspace{13em} \setminus ({\bar x}^-\MM \cup x^-\DD \cup x\MM \cup {\hat x}^-\PP) \\
{\bar m}_j\MM \cup m_j\DD \cup {\hat m}_j\PP & = 
({\bar p}_{j}\MM \cup p_j\DD \cup {\hat p}_j\PP) \cup
({\bar x}^-\MM \cup x^-\DD \cup x\MM \cup {\hat x}^-\PP) \\
& \hspace{13em} \setminus ({\bar x}^+\MM \cup x^+\DD \cup x\PP \cup {\hat x}^+\PP) 
\rlap{ ;}
 \end{align*}
 now as terms ending with the three possible symbols are disjoint, and
 each $(\thg)\eta$ for $\eta\in\{\DD,\MM,\PP\}$ is a bijection, this
 is equivalent to the six conditions:
 \begin{equation}\label{eq:16}
 \begin{aligned}
   p_j & = m_j \cup x^+ \setminus x^- & \qquad 
   m_j & = p_j \cup x^- \setminus x^+ \\
   {\bar p_j} & = {\bar m_j} \cup {\bar x}^+ \setminus ({\bar x}^- \cup x) & \qquad 
   {\bar m_j} & = \bar p_j \cup {\bar x}^- \cup x \setminus {\bar x}^+ \\
   {\hat p_j} & = {\hat m_j} \cup {\hat x}^+  \cup x\setminus {\hat x}^- & \qquad 
   {\hat m_j} & = \hat p_j \cup {\hat x}^- \setminus ({\hat x}^+ \cup x) 
   \rlap{ .}
\end{aligned}
\end{equation}
The top row is precisely (ii), the middle right is the second movement
condition for (iii), and the bottom left is the first movement
condition for (iv). The middle left will imply the first movement
condition
$\bar p_j \cup x = \bar m_j \cup {\bar x}^+ \setminus \bar x^-$ for
(iii) so long as $\bar x^- \cap x = \emptyset$; but this is certainly
the case if ${\bar x \MM} \cup {x\DD} \cup {\hat x \PP}$ is
well-formed, as
$(\bar x^- \cap x)\MM = \bar x^-\MM \cap x\MM \subset (\bar x\MM)^-
\cap (x\DD)^- = \emptyset$.
The bottom right will imply the first second condition
$\hat m_j \cup x = \hat p_j \cup {\hat x}^- \setminus \hat x^+$ for
(iv) so long as $\hat x^+ \cap x = \emptyset$; again, this is the case
if ${\bar x \MM} \cup {x\DD} \cup {\hat x \PP}$ is well-formed, as
$(\hat x^+ \cap x)\PP = \bar x^+\PP \cap x\PP \subset (\hat x\PP)^+
\cap (x\DD)^+ = \emptyset$. So (v) and (vi) imply (ii)--(iv).

Finally, since (vi) is equivalent to the conditions in~\eqref{eq:16},
it will follow from (ii)--(iv) so long as we know that $x \cap
\bar p_j = \emptyset$ and $\hat m_j \cap x = \emptyset$. 
Now, observe that $\bar x$ and $\hat x$ are cells
\begin{gather*}
  \qquad \ \bar m_j \rightarrow \bar p_j \mm[j] \bar p_{j-2} \mm[j-2] \cdots \bar p_1 \mm[1] x \mm[0] \bar m_0  \cdots \mm[j-1] \bar m_{j-1}\\
\text{and} \quad   \hat p_{j-1} \mm[j-1] \cdots \hat p_0 \mm[0] x \mm[1] \hat m_1  \cdots \mm[j-2] \hat m_{j-2} \mm[j] \hat m_{j}  \rightarrow \hat p_j 
\end{gather*}
so that in particular, the cells $\bar p_j$ and
$\bar p_{j-2} \mm[j-2] \cdots \bar p_1 \mm[1] x \mm[0] \bar m_0 \cdots
\mm[j-1] \bar m_{j-1}$
of $\Q(n)$ are $j$-composable; applying the inductive instance
of~\eqref{eq:17} for $\Q(n)$ we conclude that $x \cap \bar p_j = \emptyset$.
The same argument, applied to the domain of $\hat x$, shows that also $\hat m_j \cap x = \emptyset$ as required.

This shows that each $\varphi_{a,b}$ is a bijective map on cells of
all dimension; we next show $\omega$-functoriality and compatibility
with composition. If $\cell x$ and $\cell y$ are $(j+1)$-cells of
$a /\O(n) / b$ with $t_\ell(\cell x) = s_\ell(\cell y)$ and with
common boundary $(\cell m_k, \cell p_k)$ for each $k < \ell$, then:
  \begin{align*}
  \phi_{a,b}(\cell y \mm[\ell] \cell x)
  & = \phi_i(y \mm[\ell] x,\, \bar y \mm[\ell+1] \bar p_{\ell-1} \mm[\ell-1] \cdots \bar p_1 \mm[1] s_{\ell+1}y \mm[0] \bar m_0 \cdots \mm[\ell-2] \bar m_{\ell-2} \mm[\ell] {\bar x},\\
  & \hspace{6em} \hat y \mm[\ell] \hat p_{\ell-2} \mm[\ell-2] \cdots \hat p_0 \mm[0] t_{\ell+1}x \mm[1] \hat m_1 \cdots \mm[\ell-1] \hat m_{\ell-1} \mm[\ell+1] {\hat x})
   \\
  & = \varphi_{a,b}(y \cup x, {\bar y} \cup {\bar x}, {\hat y} \cup {\hat x} ) 
   = (y \cup x)\DD \cup (\bar y \cup {\bar x})\MM \cup (\hat y \cup \hat x)\PP \\ 
  & = ({y\DD} \cup {{\bar y}\MM} \cup {{\hat y}\PP}) \cup ({x\DD} \cup {{\bar x}\MM} \cup {{\hat x}\PP}) 
   = \phi_{a,b}(\cell y) \mm[\ell] \phi_{a,b}(\cell x)
  \end{align*}
 when $\ell$ is even, and correspondingly when $\ell$ is odd. 
 As for identity morphisms, we have
 $\phi_{a,b}(i\cell x) = \phi_{a,b}(ix, i{\bar x}, i{\hat x}) = \emptyset\MM \cup \emptyset\DD \cup\emptyset\PP = \emptyset = i\phi_{a,b}(\cell x)$;
 so $\phi_{a,b}$ is $\omega$-functorial as required. Compatibility
 of the $\varphi_{a,b}$'s with composition is similar: we have that
 $ \phi_{a,d}(k \Cdot \cell x \Cdot h) = \phi_{a,d}(x, {\bar x} \mm[0] h, k \mm[0] {\hat x}) = ({\bar x} \cup h)\MM \cup x\DD \cup (k \cup {\hat x})\PP = k \PP \cup ({\bar x}\MM \cup x\DD \cup {\hat x}\PP) \cup h\MM = \phi_{c,d}(k)\mm[0] \phi_{b,c}(\cell x) \mm[0] \phi_{a,b}(h)$.

 This proves that $c(\Q(n)) \cong \Q(n+1)$, and it remains only to
 derive~\eqref{eq:17} for $\Q(n+1)$. In the case of $0$-composable
 $1$-cells, we see from Lemma~\ref{lem:2} that if
 $\{f_1, \dots, f_r\} \colon a \rightarrow b$ is a $1$-cell of
 $\Q(n+1)$, then each $f_i$ will contain at least as many $\PP$'s as
 $a$ and strictly fewer $\PP$'s than $b$; so if
 $\{g_1, \dots, g_s\} \colon b \rightarrow c$ is another $1$-cell,
 then each $g_k$ must contain strictly more $\PP$'s than each $f_i$,
 thus proving disjointness. As for $(j+1)$-composable $(j+2)$-cells in
 $\Q(n+1)$, any pair of such must live in some hom-$\omega$-category;
 the only new case to consider is that of
 $\Q(n+1)(a\MM, b\PP) \cong a / \Q(n) / b$. So let
 $\cell x \colon \cell a \rightarrow \cell b$ and
 $\cell y \colon \cell b \rightarrow \cell c$ be $(j+1)$-cells in
 $a / \Q(n) / b$ with common boundary $(\cell m_k, \cell p_k)$ for all
 $k < j$. We have $x \colon a \rightarrow b$ and
 $y \colon b \rightarrow c$ in $\Q(n)$, whence $x \cap y = \emptyset$
 by~\eqref{eq:17} for $\Q(n)$; moreover, assuming $j$ is even, we have
 that $\bar y$ and
 $\bar p_{j-1} \mm[j-1] \cdots \bar p_1 \mm[1] y \mm[0] \bar m_0
 \cdots \mm[j-2] \bar m_{j-2} \mm[j] {\bar x}$
 are $(j+1)$-composable $(j+2)$-cells, whence
 $\bar x \cap \bar y = \emptyset$ again by~\eqref{eq:17} for $\Q(n)$;
 similarly, $\hat x$ and
 $\hat y \mm \hat p_{j-2} \mm[j-2] \cdots \hat p_0 \mm[0] x \mm[1]
 \hat m_1 \cdots \mm[j-1] \hat m_{j-1}$
 are $(j+1)$-composable $(j+2)$-cells, whence
 $\hat x \cap \hat y = \emptyset$. A dual argument applies when $j$ is
 odd, and in both cases we conclude that the composable pair
 $\phi_i(\cell x) = \bar x\MM \cup x\DD \cup \hat x\PP$ and
 $\phi_i(\cell y) = \bar y\MM \cup y\DD \cup \hat y\PP$ satisfy
 $(\bar x\MM \cup x\DD \cup \hat x\PP) \cap (\bar y\MM \cup y\DD \cup
 \hat y\PP) = \emptyset$, as required.
\end{proof}
As before, we have avoided using any aspect of the theory of parity
complexes beyond the basic definitions, and so in an identical manner
to Remark~\ref{rk:1} we may exploit the preceding theorem
to give a simpler proof that the cubes are indeed well-defined
$\omega$-categories.

\appendix
\section{Proofs of well-definedness}\label{sec:proofs-well-defin}
%\section{Relation with the lax Gray tensor product}
%\label{sec:relation-with-lax}

%\subsection{Well-definedness of coslices}
%\label{sec:well-defin-cosl}
\begin{Prop}
  \label{prop:1}
  For any $\omega$-category $C$ and $a \in C_0$, the lax coslice $a/C$
  is a well-defined $\omega$-category.
\end{Prop}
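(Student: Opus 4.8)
The plan is to verify directly that the data of Definition~\ref{def:2} satisfies all the $\omega$-category axioms: the globularity equations, the source--target axioms, the category axioms, and the interchange axiom. The key structural fact I want to exploit is that each cell $\cell x = (x, \bar x)$ of $a/C$ consists of a ``base'' cell $x$ of $C$ together with a ``witness'' cell $\bar x$ of $C$ whose source and target are prescribed whiskered composites of the boundary data. Since the base component is manipulated exactly as in $C$, where all axioms already hold, the entire burden falls on the witness component. My strategy is therefore to isolate the witness calculations and organise them so that the parity-dependent bracketing (the long whiskered composites like $\bar p_{n-1} \mm[n-1] \cdots \mm[1] x \mm[0] \bar m_0 \mm[2] \cdots \mm \bar m_n$) is handled uniformly.

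**First I would** pin down the source--target axioms, since these are prerequisite to even stating the composition axioms: I must check that for a composite $\cell y \mm \cell x$ as defined, the formula given for its witness genuinely has the source and target demanded of an $(n+1)$-cell with the correct $i$-boundaries. This is the most delicate bookkeeping, because the witness of a composite is itself a composite in $C$, and computing its source and target requires the source--target axioms of $C$ together with the middle-four interchange law to reassociate and simplify the long whiskered strings. I expect to treat the even and odd cases separately, as the definition does, and to verify that the two cases are genuinely dual so that only one needs full detail. Once source and target are confirmed, the category axioms (associativity and unitality of each $\mm$) and the interchange axiom reduce to corresponding identities for the witness components, which follow from the same axioms in $C$ after the whiskering conventions are unwound.

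**The hard part will be** the interchange axiom, specifically verifying that $(\cell x \mm \cell y) \mm[k] (\cell z \mm \cell w) = (\cell x \mm[k] \cell z) \mm (\cell y \mm[k] \cell w)$ at the level of witnesses. Here the two composition formulas for $\mm$ and $\mm[k]$ interact, and the whiskered correction terms ($s_{n+1}y$, $t_{n+1}x$, and the various $\bar p$ and $\bar m$ factors) must be shown to match up on both sides. The difficulty is that these correction factors are defined via the \emph{boundaries} of the cells being composed, so I will need to track carefully how boundaries behave under composition---precisely the content of the source--target axioms established in the previous step---and then invoke interchange in $C$ to rearrange the two nested composites into a common normal form. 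I anticipate that the cleanest route is to reduce everything to a single large whiskered composite in $C$ and check that both sides of the interchange equation produce the identical such composite, the equality then being a formal consequence of associativity and interchange in $C$. Given the symmetry between the even and odd formulas, I would present one parity in full and remark that the other follows by the duality $X/b = (b/X^{\op})^{\op}$ already built into Definition~\ref{def:2}.
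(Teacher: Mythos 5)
Your overall plan---direct verification of the axioms, reducing everything on witness components to associativity and interchange in $C$---is the same in spirit as the paper's proof, but as stated it has a gap that arises \emph{before} any axiom can be checked: the notion of $(n+1)$-cell in Definition~\ref{def:2} is not obviously well-posed. The prescribed source or target of the witness $\bar x$ is a long whiskered composite such as $\bar p_{n-1} \mm[n-1] \cdots \bar p_1 \mm[1] x \mm[0] \bar m_0 \mm[2] \cdots \mm \bar m_n$, and the claim that its factors are composable at the indicated dimensions is itself a statement requiring proof, by induction on dimension, since the boundaries of the lower witnesses are in turn such composites. The paper's proof is organised around exactly this point: simultaneously with the well-definedness of $n$-cells it constructs auxiliary $\omega$-functors $M_n$ \eqref{eq:1} and $P_n$ \eqref{eq:2} between iterated hom-$\omega$-categories encoding the whiskering, so that an $(n+1)$-cell is literally a pair with $\bar x \colon M_n x \rightarrow \bar p_n$ (respectively $\bar x \colon \bar m_n \rightarrow P_n x$). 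Your proposal takes the cell data as given and proceeds straight to the axioms, which skips this step; and packaging the whiskering as an $\omega$-functor is not mere bookkeeping, since its functoriality is used repeatedly in the later verifications.

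Relatedly, your ``reduce both sides to a common normal form'' strategy for the source--target and interchange axioms will not close at a fixed dimension. When $\cell x, \cell y$ are $n$-composable $k$-cells with $k > n+1$, the boundary of the witness of $\cell y \mm \cell x$ is expressed through the whiskering functor attached to the composite's $(k-1)$-boundary, and one needs the compatibility law \eqref{eq:6}, relating $M^{\cell y \mm \cell x}_{k-1}$ to $M^{\cell y}_{k-1}$, $M^{\cell x}_{k-1}$ and the correction term $M(t_{n+1}x)$, proved by induction on $k$ \emph{simultaneously} with the source--target axioms. You gesture at ``tracking how boundaries behave under composition,'' which is the right instinct, but without identifying this strengthened inductive hypothesis the computation stalls; the same law is then the key input to the interchange verification, as in the paper. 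Finally, your parity reduction is mis-cited: the duality $X/b = (b/X^{\op})^{\op}$ converts coslices of $X^{\op}$ into slices of $X$ (it is how the paper transfers well-definedness from coslice to slice), but it does not interchange the even-$n$ and odd-$n$ cases \emph{within} the coslice $a/C$; those two cases must simply be treated as parallel computations, as the paper does.
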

\begin{proof}
  We first show by induction on $n$ that (a) the cells of
  $a/C$ of dimension $\leqslant n$ are well-defined; and (b) for any
  parallel pair of $n$-cells $(\cell m_n, \cell p_n)$ with $i$-boundary
  $(\cell m_i, \cell p_i)$ for all $i < n$, there is, for $n$ even,
  a well-defined $\omega$-functor
  \begin{equation}\label{eq:1}
    M_n \colon C(m_0, p_0) \cdots (m_n, p_n) \rightarrow C(s\bar p_0, t\bar p_0)(s\bar m_1, t\bar m_1) \cdots (s\bar p_n, t\bar p_n)
  \end{equation}
  sending $x$ to
  $\bar p_{n-1} \mm[n-1] \cdots \bar p_3 \mm[3] \bar p_1 \mm[1] x
  \mm[0] \bar m_0 \mm[2] \bar m_2 \cdots \mm \bar m_n$,
  and, for $n$ odd, a well-defined $\omega$-functor
  \begin{equation}\label{eq:2}
    P_n \colon C(m_0, p_0) \cdots (m_n, p_n) \rightarrow C(s\bar p_0, t\bar p_0)(s\bar m_1, t\bar m_1) \cdots (s\bar m_n, t\bar m_n)
  \end{equation}
  sending $x$ to
  $\bar p_n \mm \cdots \bar p_3 \mm[3] \bar p_1 \mm[1] x \mm[0]
  \bar m_0 \mm[2] \bar m_2 \cdots \mm[n-1] \bar m_{n-1}$.

  For the base case $n = 0$, it is clear for (a) that the notion of
  $0$-cell is well-defined. As for (b), if $(\cell m_0, \cell p_0)$ are
  a (necessarily parallel) pair of $0$-cells, then, since
  $\bar m_0 \colon a \rightarrow m_0$, the assignation
  $x \mapsto x \mm[0] \bar m_0$ defines an $\omega$-functor
  $M_0 \colon C(m_0, p_0) \rightarrow C(s\bar p_0, t \bar p_0) = C(a,
  p_0)$ as required for~\eqref{eq:1}.

  We now assume the result for $n$, and verify it for $(n+1)$. First
  let $n$ be even. For (a), let $(\cell m_n, \cell p_n)$ be a parallel
  pair of $n$-cells, and $M_n$ the associated
  $\omega$-functor~\eqref{eq:1}; then an $(n+1)$-cell
  $\cell x \colon \cell m_n \rightarrow \cell p_n$ of $a/C$ is a pair
  \begin{equation}\label{eq:3}
    (x \in C(m_0, p_0) \cdots (m_n, p_n),\, \bar x \colon M_nx \rightarrow \bar p_n)\rlap{ ,}
  \end{equation}
  and so well-defined. For (b), if
  $\cell m_{n+1}, \cell p_{n+1}$ are both $(n+1)$-cells $\cell m_n \rightarrow \cell
  p_n$,
  then $\bar m_{n+1} \colon M_nm_{n+1} \rightarrow \bar p_n$ and
  $\bar p_{n+1} \colon M_np_{n+1} \rightarrow \bar p_n$; whence the
  assignation $x \mapsto \bar p_{n+1} \mm[n+1] M_nx$ yields an
  $\omega$-functor
  \begin{equation*}
    C(m_0, p_0) \cdots (m_{n+1}, p_{n+1}) \rightarrow C(s\bar p_0, t\bar p_0)\cdots (s\bar p_{n}, t\bar p_{n+1})(M_nm_{n+1}, \bar p_n) \rlap{ ,}
  \end{equation*}
  which % , as $(s\bar m_{n+1}, t\bar m_{n+1}) = (M_nm_{n+1}, \bar p_n)$,
  is of the correct form to be the $P_{n+1}$ of~\eqref{eq:2}. Suppose
  now that $n$ is odd. For (a), if $(\cell m_n, \cell p_n)$ are
  parallel $n$-cells, and now $P_n$ is the associated $\omega$-functor
  of~\eqref{eq:2}, then an $(n+1)$-cell
  $\cell x \colon \cell m_n \rightarrow \cell p_n$ of $a/C$ is a pair
  \begin{equation}\label{eq:4}
    (x \in C(m_0, p_0) \cdots (m_n, p_n),\, \bar x \colon \bar m_n \rightarrow P_nx)\rlap{ ,}
  \end{equation}
  and so, again, well-defined. For (b), if
  $\cell m_{n+1}, \cell p_{n+1} \colon \cell m_n \rightarrow \cell
  p_n$,
  then the operation $x \mapsto P_nx \mm[n+1] \bar m_{n+1}$ defines an
  $\omega$-functor of the right form to be the $M_{n+1}$
  of~\eqref{eq:1}. This completes the inductive step.

  So $a/C$ is well-defined as a globular set; given
  $\cell x \in (a/C)_k$ and $n < k$, we will denote the
  $\omega$-functor~\eqref{eq:1} or~\eqref{eq:2} associated to the
  $n$-boundary $(\cell m_n, \cell p_n)$ of $\cell x$ as
  $M_{n}^\cell x$ (for $n$ even) or $P_n^\cell x$ (for $n$ odd).
  Note that, for each $n < k$, we have by~\eqref{eq:3}, \eqref{eq:4}
  and induction that:
  % with associated $\omega$-functors $M_{2i}$ for all $2i \leqslant n$
  % and $P_{2i+1}$ for all $2i+1 \leqslant n$, then
  \begin{equation}\label{eq:5}
    s_{n}(\bar x) =
    \begin{cases}
      \bar m_{n-1} & \text{$n$ even;}\\
      M_{n-1}^\cell{x}(m_{n}) & \text{$n$ odd,}
    \end{cases} \ \  \text{and} \ \ \ 
    t_{n}(\bar x) =
    \begin{cases}
      P_{n-1}^\cell{x}(p_{n}) & \text{$n$ even;}\\
      \bar p_{n-1} & \text{$n$ odd.}
    \end{cases}
  \end{equation}

  We now show that $a/C$ is a well-defined
  $\omega$-category. The identity operations are clearly well-defined;
  for composition, let
  $\cell x \colon \cell a \rightsquigarrow \cell b$ and
  $\cell y \colon \cell b \rightsquigarrow \cell c$ be $n$-composable
  $k$-cells whose common $i$-boundary for $i < n$ is
  $(\cell m_i, \cell p_i)$. First let $n$ be odd. Writing
  $M = M_{n-1}^{\cell x} = M_{n-1}^{\cell y}$, the composite cell
  in $a/C$ is the pair
  \begin{equation*}
    \cell y \mm \cell x \defeq (y \mm x,\, \bar y \mm M(t_{n+1}x) \mm[n+1] \bar x)\rlap{ .}
  \end{equation*}
  The first component is clearly well-defined; writing
  $\bar y \ast \bar x$ for the second, note that the $\omega$-functors
  $P^{\cell x}_n$ and $P^{\cell y}_n$ satisfy
  $P^{\cell x}_n(u) = \bar b \mm M(u)$ and
  $P^{\cell y}_n(u) = \bar c \mm M(u)$; from this and~\eqref{eq:5} we
  conclude that
  \begin{equation*}
    \bar x \colon \bar a \rightsquigarrow \bar b \mm M(t_{n+1}x) 
     \qquad \text{and} \qquad 
    \bar y \colon \bar b \rightsquigarrow \bar c \mm M(t_{n+1}y)\rlap{ ,} 
  \end{equation*}
  so that $\bar y \ast \bar x$ is indeed a
  well-defined cell of $C$. We next check it has the correct source
  and target. If $k = n+1$, then we should have
  $\cell y \mm \cell x \colon \cell a \rightarrow \cell c$; so
  by~\eqref{eq:5}, $\bar y \ast \bar x$ should be
  a map $\bar a \rightarrow \bar c \mm M(y \mm[n] x)$. But this is so
  since it is the composite
  \begin{equation*}
    \bar a \xrightarrow{\bar x} \bar b \mm Mx \xrightarrow{\bar y \mm Mx} \bar c \mm My \mm Mx =
\bar c \mm M(y \mm x)\rlap{ .}
  \end{equation*}
  Now let $k>n+1$; if $\cell x \colon \cell u \rightarrow \cell v$ and
  $\cell y \colon \cell w \rightarrow \cell z$, then we should have
  $\cell y \mm \cell x \colon \cell w \mm \cell u \rightarrow \cell z
  \mm \cell v$.
  We show by induction on $k$ that (a) $\cell y \mm \cell x$ is a
  $k$-cell of this form; and (b) for all cells
  $f \colon u \rightsquigarrow v$ and
  $g \colon w \rightsquigarrow z$ we have:
  \begin{equation}\label{eq:6}
    \begin{aligned}
      P^{\cell y \mm \cell x}_{k-1}(g \mm f) &= P^{\cell y}_{k-1}(g) \mm M(t_{n+1}x) \mm[n+1] P^{\cell x}_{k-1}(f) &&\text{ if $k$ even;}\\
      M^{\cell y \mm \cell x}_{k-1}(g \mm f) &= M^{\cell y}_{k-1}(g) \mm M(t_{n+1}x) \mm[n+1] M^{\cell x}_{k-1}(f) &&\text{ if $k$ odd.}
    \end{aligned}
  \end{equation}

  Assuming (a) and (b) for all $j < k$, we prove it for $k$. If $k$ is
  odd, then by~\eqref{eq:5} we have
  $\bar x \colon M^{\cell x}_{k-1}(x) \rightarrow \bar v$ and
  $\Bar y \colon M^{\cell y}_{k-1}(y) \rightarrow \bar z$ and require
  for (a) that
  $\bar y \ast \bar x \colon M'(y \mm x) \rightarrow \bar z \ast \bar
  v$,
  where $M'$ is the $\omega$-functor associated to the parallel pair
  $(\cell w \mm \cell u, \cell z \mm \cell v)$. Note first that we
  have
  $t(\bar y \ast \bar x) = t(\bar y \mm M(t_{n+1}x) \mm[n+1] \bar x) =
  t\bar y \mm Mv \mm[n+1] t\bar x = \bar z \mm M(t_{n+1}v) \mm[n+1]
  \bar v = \bar z \ast \bar v$
  as required; on the other hand, we have
  $s(\bar y \ast \bar x) = s\bar y \mm M(t_{n+1}x) \mm[n+1] s\bar x =
  M^{\cell y}_{k-1}(y) \mm M(t_{n+1}x) \mm[n+1] M^{\cell x}_{n+1}(x)$
  so that for $\cell y \mm \cell x$ to be a cell of the form required
  for (a), it will suffice to prove
  \begin{equation}
    \label{eq:7}
  M'(g \mm f) = M^{\cell y}_{k-1}(g) \mm M(t_{n+1}x) \mm[n+1] M^{\cell
    x}_{k-1}(f)
  \end{equation}
  for all cells $f \colon u \rightsquigarrow v$ and
  $g \colon w \rightsquigarrow z$. Once we know $\cell y \mm \cell x$
  is a cell, we will have $M' = M^{\cell y \mm \cell x}_{k-1}$, so
  that~\eqref{eq:7} gives (b) as required. We verify~\eqref{eq:7}
  first when $k = n+2$; here, \eqref{eq:1}, functoriality of
  $M$ and interchange gives
  \begin{align*}
    M'(g \mm f) &= \bar c \mm M(g \mm f) \mm[n+1] (\bar w \ast \bar u) = 
    \big(\bar c \mm M(g \mm f)\big) \mm[n+1] (\bar w \mm Mu) \mm[n+1] \bar u\\
    &= \big(\bar c \mm M(g \mm v)\big) \mm[n+1] \big(\bar c \mm M(w \mm f)\big) \mm[n+1] (\bar w \mm Mu) \mm[n+1] \bar u\\
    &= \big(\bar c \mm M(g \mm v)\big) \mm[n+1] \big(\bar w \mm M(w \mm f)\big) \mm[n+1] \bar u\\
    &= \big(\bar c \mm M(g \mm v)\big) \mm[n+1] (\bar w \mm Mv) \mm[n+1] (\bar b \mm Mf) \mm[n+1] \bar u\\
    &= (\bar c \mm Mg \mm[n+1] \bar w) \mm Mv \mm[n+1] (\bar b \mm Mf \mm[n+1] \bar u)\\
    &= M^{\cell y}_{k-1}(g) \mm M(t_{n+1}x) \mm[n+1] M^{\cell x}_{k-1}(f)
  \end{align*}
  as required. In the case $k > n+2$, we have that
  \begin{align*}
    M'(g \mm f) &= P^{\cell w \mm \cell u}_{k-2}(g \mm f) \mm[k-1] (\bar w \ast \bar u) \\
    &= \big(P^{\cell w}_{k-2}(g) \mm M(t_{n+1}x) \mm[n+1] P^{\cell u}_{k-2}(f)\big)\mm[k-1] (\bar w \mm Mu \mm[n+1] \bar u)\\
    &= (P^{\cell w}_{k-2}(g) \mm[k-1] \bar w) \mm M(t_{n+1}x) \mm[n+1] (P^{\cell u}_{k-2}(f) \mm[k-1] \bar u)\\
    &= M^{\cell y}_{k-1}(g) \mm M(t_{n+1}x) \mm[n+1] M^{\cell x}_{k-1}(f)
  \end{align*}
  by~\eqref{eq:2}, the case $(k-1)$ of~\eqref{eq:6}, and 
  interchange. This completes the inductive step for odd $k$; we omit
  the analogous argument for $k$ even.

  We have thus proved for odd $n$ that composition $\mm$ in $a/C$ is
  well-defined and satisfies the source--target axioms; the case where
  $n$ is even is analogous, and so omitted. The identity axioms for
  $a/C$ are easy; next, for associativity, we must show that
  $\cell x \mm (\cell y \mm \cell z) = (\cell x \mm \cell y) \mm \cell
  z$
  in $a/C$. Suppose that $n$ is odd, and let
  $M = M_{n-1}^{\cell x} = M_{n-1}^{\cell y} = M_{n-1}^{\cell z}$.
  Then the two iterated composites are
  \begin{align*}
    \big((x \mm y) \mm z,\,&\  (\bar x \mm M(t_{n+1}y) \mm[n+1] \bar y) \mm M(t_{n+1}z) \mm[n+1] \bar z\big)\\
    \text{and} \ 
    \big(x \mm (y \mm z),\,&\  \bar x \mm M(t_{n+1}(y \mm z)) \mm[n+1] (\bar y \mm M(t_{n+1}z) \mm[n+1] \bar z)\big)
\end{align*}
which are easily equal by functoriality of $M$ and 
interchange. The case of $n$ even is dual, and so omitted; and it
remains only to verify the interchange axiom
$(\cell z \mm[k] \cell w) \mm (\cell y \mm[k] \cell x) = (\cell z \mm
\cell y) \mm[k] (\cell w \mm \cell x)$
for all suitable cells $\cell x, \cell y, \cell w, \cell z$ in $a/C$.
Of course, the equality is clear on first components; on second
components, there are four cases to consider depending on the parities
of the dimensions $n < k$; we give only the case where both $n$ and
$k$ are odd, as the others are similar. So let
$M = M^{\cell a}_{n-1} = M^{\cell b}_{n-1} = M^{\cell c}_{n-1}$, let
$M' = M^{\cell w}_{k-1} = M^{\cell z}_{k-1}$ and let
$M'' = M^{\cell x}_{k-1} = M^{\cell y}_{k-1}$. The second component of
$(\cell z \mm[k] \cell w) \mm (\cell y \mm[k] \cell x)$ is:
\begin{align*}
 & (\bar z \mm[k] M't_{k+1}w \mm[k+1] \bar w) \mm Mt_{n+1}(y \mm[k] x) \mm[n+1] (\bar y \mm[k] M''t_{k+1} x \mm[k+1] \bar x)\\
 =\ & \big(\,[(\bar z \mm[k] M't_{k+1}w) \mm Mt_{n+1}x] \mm[k+1] [\bar w \mm Mt_{n+1}x]\,\big) \mm[n+1] \big(\,[\bar y \mm[k] M''t_{k+1} x] \mm[k+1] \bar x\,\big)\\
 =\ & \big(\,[(\bar z \mm[k] M't_{k+1}w) \mm Mt_{n+1}x] \mm[n+1] [\bar y \mm[k] M''t_{k+1} x]\,\big) \mm[k+1] \big(\,[\bar w \mm Mt_{n+1}x] \mm[n+1] \bar x\,\big)
\end{align*}
using interchange. The left-hand bracketed term is in turn equal to
\begin{align*}
   & ([\bar z \mm Mt_{n+1}x] \mm[k] [M't_{k+1}w \mm Mt_{n+1}x]) \mm[n+1] (\bar y \mm[k] M''t_{k+1} x)\\
 =\ & ([\bar z \mm Mt_{n+1}x] \mm[n+1] \bar y) \mm[k] ([M't_{k+1}w \mm Mt_{n+1}x] \mm[n+1] M''t_{k+1} x)\\ =\ & ([\bar z \mm Mt_{n+1}x] \mm[n+1] \bar y) \mm[k] M^{\cell w \mm \cell x}_{k-1}(t_{k+1}(w \mm x))
\end{align*}
using interchange and~\eqref{eq:6}, which on recomposing with the
right-hand bracketed term above yields the second component of $(\cell
z \mm \cell y) \mm[k] (\cell w \mm \cell x)$, as required.
\end{proof}

\begin{Prop}
  \label{prop:5}
  For any $\omega$-category $C$ and objects $a,b \in C_0$, the
  $\omega$-functor $\Cdot \colon a/C \times C(b,a) \rightarrow b/C$ is well-defined.
\end{Prop}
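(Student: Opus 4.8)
The plan is to follow the same pattern as the proof of Proposition~\ref{prop:1}, checking in turn that $\Cdot$ is well-defined on cells and then that it preserves sources, targets, identities and composition. Since $\Cdot$ leaves first components untouched and acts on second components by the whiskering $(\thg) \mm[0] h$, everything will reduce to the definitions together with the interchange, associativity and unit laws in $C$. The one genuinely new ingredient is a compatibility between $(\thg) \mm[0] h$ and the structure $\omega$-functors $M_n, P_n$ of~\eqref{eq:1} and~\eqref{eq:2}, and this is where the work lies.

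I would first treat well-definedness on cells by induction on dimension, the point being to check that the second component $\bar x \mm[0] h$ of $\cell x \Cdot h$ (which is defined, since $t_0 h = a = s_0 \bar x$) carries the source and target demanded of a cell of $b/C$ by Definition~\ref{def:2}. As $\Cdot$ fixes first components, the $i$-boundary of $\cell x \Cdot h$ is $(\cell m_i \Cdot s_i h, \cell p_i \Cdot t_i h)$, with second components $\bar m_i \mm[0] s_i h$ and $\bar p_i \mm[0] t_i h$; by the inductive hypothesis these are cells of $b/C$, so the functor of Proposition~\ref{prop:1} for $b/C$ attached to this boundary---call it $M_n'$ when $n$ is even and $P_n'$ when $n$ is odd---is defined, and has the same domain as the functor $M_n$ or $P_n$ attached to $\cell x$ in $a/C$. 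The crux is then the identity
\begin{equation*}
  M_n'(u) = M_n(u) \mm[0] s_{n+1}h \ \ (\text{$n$ even}) \qquad \text{and} \qquad P_n'(u) = P_n(u) \mm[0] t_{n+1}h \ \ (\text{$n$ odd})\rlap{ .}
\end{equation*}
Granting this, for $n$ even we have $\bar x \colon M_n(x) \to \bar p_n$ in $C$, so the source--target axioms for $\mm[0]$ give that the $(n+1)$-source and $(n+1)$-target of $\bar x \mm[0] h$ are $M_n(x) \mm[0] s_{n+1}h = M_n'(x)$ and $\bar p_n \mm[0] t_{n+1}h$; these are exactly the source and the (second component of the) target required of $\cell x \Cdot h$ in $b/C$. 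The odd case is dual, using $P_n'$ for the target and checking the source directly.

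To prove the displayed identity I would distribute $s_{n+1}h$ (respectively $t_{n+1}h$) across the iterated composite defining $M_n(u)$ (respectively $P_n(u)$) by repeated interchange. Each $\bar m_i$ sits on the $i$-source side and each $\bar p_i$ on the $i$-target side of its $\mm[i]$, so interchange attaches a source-boundary of $h$ to each $\bar m_i$ and a target-boundary to each $\bar p_i$; by globularity these are precisely the $s_i h$ and $t_i h$ occurring in $M_n'$, while associativity rewrites the innermost block as $u \mm[0] (\bar m_0 \mm[0] s_0 h)$. The outcome is exactly $M_n'(u)$. This bookkeeping is the main obstacle, but it is of the same character as the interchange computations already carried out around~\eqref{eq:7}.

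Finally, functoriality. Preservation of identities is immediate, since $\mm[0]$ preserves them: $i\cell x \Cdot ih = (ix,\, i\bar x \mm[0] ih) = (ix,\, i(\bar x \mm[0] h)) = i(\cell x \Cdot h)$. For composition I would verify $(\cell y \mm[\ell] \cell x) \Cdot (k \mm[\ell] g) = (\cell y \Cdot k) \mm[\ell] (\cell x \Cdot g)$; the two sides agree on first components ($y \mm[\ell] x$ in each case), and on second components the left side is the composite formula of Definition~\ref{def:2} post-composed with $k \mm[\ell] g$ at level $0$, which, distributed by interchange exactly as above (now with full cells rather than boundaries), coincides with the $b/C$-composite of $(y, \bar y \mm[0] k)$ and $(x, \bar x \mm[0] g)$. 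Together with the source--target compatibility established above and built into Definition~\ref{def:5}, this exhibits $\Cdot$ as a well-defined $\omega$-functor.
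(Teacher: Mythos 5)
Your proposal is correct and takes essentially the same route as the paper: your displayed compatibility between the structure functors of $a/C$ and $b/C$ is precisely the paper's key identity~\eqref{eq:8}, proved there too by induction on dimension together with the observation (which you also make) that the identity can be verified before $\cell x \Cdot h$ is known to be a cell, since the functors $M^{\cell x \Cdot h}_n$, $P^{\cell x \Cdot h}_n$ depend only on the inductively well-defined boundary pair $(\cell m_n \Cdot sh, \cell p_n \Cdot th)$. The only divergences are presentational: the paper establishes~\eqref{eq:8} by peeling a single layer via the recursion $M^{\cell x}_n(\thg) = P^{\cell m_n}_{n-1}(\thg) \mm[n] \bar m_n$ and one application of interchange (rather than your unrolled ``repeated interchange'' across the whole composite, which is the same computation), and you additionally check preservation of identities and composites, a verification the paper leaves implicit as routine.
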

\begin{proof}
  Recall that $\Cdot$ is defined on $0$-cells by
  $\cell x \Cdot h = (x, {\bar x} \mm[0] h)$ and on $(n+1)$-cells by
  $\cell x \Cdot h = (x, {\bar x} \mm[0] h) \colon s\cell x \Cdot sh
  \rightarrow t\cell x \Cdot th$.
  Well-definedness is clear on $0$-cells. At higher dimensions, we
  show by induction on $n$ that for each pair $(\cell x, h)$ of
  dimension $(n+1)$, the cell $\cell x \Cdot h$ is well-defined and
  satisfies
  \begin{equation}\label{eq:8}
    M^{\cell x \!\Cdot\! h}_{n}(\thg) = M^{\cell x}_{n}(\thg) \mm[0] sh \qquad \text{or} \qquad 
    P^{\cell x \!\Cdot\! h}_{n}(\thg) = P^{\cell x}_{n}(\thg) \mm[0] th
  \end{equation}
  according as $n$ is even or odd, where, as before, $M^{\cell x}_{n}$
  and $P^{\cell x}_{n}$ denote the auxiliary functors~\eqref{eq:1}
  and~\eqref{eq:2} associated to the $n$-boundary
  $(\cell m_n, \cell p_n)$ of $\cell x$.

  So let $\cell x \colon \cell m_n \rightarrow \cell p_n$ and
  $h \colon u \rightarrow v$ be $(n+1)$-cells of $a/C$ and $C(b,a)$;
  by induction $\cell m_n \Cdot u$ and $\cell p_n \Cdot v$ are
  well-defined, and we must show that
  $\cell x \Cdot h \colon \cell m_n \Cdot u \rightarrow \cell p_n
  \Cdot v$
  is too. Even without knowing this, we may still verify~\eqref{eq:8}
  since $M^{\cell x \!\Cdot\! h}_n$ or $P^{\cell x \!\Cdot\! h}_n$ (as
  the case may be) depend only on the well-defined boundary pair
  $(\cell m_n \Cdot u, \cell p_n \Cdot v)$. But when $n$ is even we
  have
  \begin{align*}
    M^{\cell x}_n(\thg) \mm[0] u & = (P^{\cell m_n}_{n-1}(\thg) \mm[n] \bar m_n) \mm[0] u 
                              = (P^{\cell m_n}_{n-1}(\thg) \mm[n] \bar m_n) \mm[0] (tu \mm[n] u) \\
                              & = (P^{\cell m_n}_{n-1}(\thg) \mm[0] tu) \mm[n] (\bar m_n \mm[0] u) 
                              = P^{\cell m_n \!\Cdot\! u}_{n-1}(\thg) \mm[n] (\bar m_n \mm[0] u)\\
                              &= M^{\cell x \!\Cdot h}_n(\thg)
  \end{align*}
  as required, and correspondingly for $n$ odd. We now use this to
  show that $\cell x \Cdot h = (x, \bar x \mm[0] h)$ is a well-defined
  cell $\cell m_n \Cdot u \rightarrow \cell p_n \Cdot v$. Clearly the
  first component is a map $x \colon m_n \rightarrow p_n$ as required.
  For the second component, suppose first that $n$ is even; then
  by~\eqref{eq:3}, $\bar x$ is a cell
  $M^{\cell x}_n(x) \rightarrow \bar p_n$, whose $0$-source is
  by~\eqref{eq:5} equal to $a$. Thus $\bar x \mm[0] h$ is a
  well-defined cell
  $M^{\cell x}_n(x) \mm[0] u \rightarrow \bar p_n \mm[0] v$ and by the
  above calculation $M^{\cell x}_n(x) \mm[0] u = M^{\cell x \!\Cdot
    h}_n(x)$ as required. The case where $n$ is odd is similar.
\end{proof}

\bibliographystyle{acm}
\bibliography{bibdata}

\end{document}